%
%
%

\documentclass[twoside,a4paper,12pt,centertags]{amsart}
\usepackage{amsmath,amssymb,verbatim,vmargin}
\usepackage[bookmarks=true]{hyperref}   
\usepackage{color}

\theoremstyle{plain}
\newtheorem{thm}{Theorem}[section]
\newtheorem{lem}[thm]{Lemma}
\newtheorem{cor}[thm]{Corollary}
\newtheorem{prop}[thm]{Proposition}

\theoremstyle{definition}
\newtheorem{defn}[thm]{Definition}
\newtheorem{rem}[thm]{Remark}



\mathchardef\semic="303B
\newcommand{\R}{{\mathbf R}}
\newcommand{\C}{{\mathbf C}}
\newcommand{\Z}{{\mathbf Z}}

\newcommand{\mX}{{\mathcal X}}
\newcommand{\mY}{{\mathcal Y}}

\newcommand{\mD}{{\mathcal D}}

\newcommand{\sett}[2]{ \{ #1 \, \semic \, #2 \} }
\newcommand{\bigsett}[2]{ \Big\{ #1 \semic #2 \Big\} }
\newcommand{\scl}[2]{\langle #1,#2 \rangle}

\newcommand{\dist}{\text{{\rm dist}}\,}
\newcommand{\diam}{\text{{\rm diam}}\,}

\newcommand{\barint}{\mbox{$ave \int$}}
\newcommand{\divv}{{\text{{\rm div}}}}

\newcommand{\loc}{\text{{\rm loc}}}

\def\barint_#1{\mathchoice
            {\mathop{\vrule width 6pt
height 3 pt depth -2.5pt
                    \kern -8.8pt
\intop}\nolimits_{#1}}%
            {\mathop{\vrule width 5pt height
3 pt depth -2.6pt
                    \kern -6.5pt
\intop}\nolimits_{#1}}%
            {\mathop{\vrule width 5pt height
3 pt depth -2.6pt
                    \kern -6pt
\intop}\nolimits_{#1}}%
            {\mathop{\vrule width 5pt height
3 pt depth -2.6pt
          \kern -6pt \intop}\nolimits_{#1}}}



\usepackage{color}

\definecolor{gr}{rgb}   {0.,   0.8,   0. } 
\definecolor{bl}{rgb}   {0.,   0.5,   1. } 
\definecolor{mg}{rgb}   {0.7,  0.,    0.7}




\makeatletter
\@namedef{subjclassname@2010}{%
  \textup{2010} Mathematics Subject Classification}
\makeatother

\begin{document}

\title[On the Carleson duality]
{On the Carleson duality}
\author[Tuomas Hyt\"onen]{Tuomas Hyt\"onen} 
\author[Andreas Ros\'en]{Andreas Ros\'en$\,^1$}
\thanks{$^1\,$Formerly Andreas Axelsson}
\address{Tuomas Hyt\"onen, Institutionen f\"or matematik och statistik, PB 68 (Gustaf H\"all\-str\"oms gata 2b), FI-00014 Helsingfors universitet, Finland} 
\email{tuomas.hytonen@helsinki.fi}
\address{Andreas Ros\'en, Matematiska institutionen, Link\"opings universitet, 581 83 Lin\-k\"oping, Sweden}
\email{andreas.rosen@liu.se}

\begin{abstract}
As a tool for solving  the Neumann problem for divergence form equations, Kenig and Pipher introduced the space $\mX$ of  functions on the half space, such that the non-tangential maximal function of their $L_2$-Whitney averages belongs to $L_2$ on the boundary. In this paper, answering questions which arose from recent studies of boundary value problems by Auscher and the second author, we find the pre-dual of $\mX$, and characterize the pointwise multipliers from $\mX$ to $L_2$ on the half space as the well-known Carleson-type space of functions introduced by Dahlberg. We also extend these results to $L_p$ generalizations of the space $\mX$.
Our results elaborate on the well-known duality between Carleson measures and non-tangential maximal functions.
\end{abstract}

\subjclass[2010]{Primary: 42B35; Secondary: 42B25, 42B37.}


\keywords{Carleson's inequality, non-tangential maximal function, dyadic model}

\maketitle

\section{Introduction}

A fundamental estimate in harmonic analysis is Carleson's inequality for Carleson measures. See \cite[Thm~2]{Carleson:58} and \cite[Thm~1]{Carleson:62} for the original formulations and applications in the theory of interpolating analytic functions, or for example Stein~\cite[Sec.~II 2.2]{stein:harm} and Coifman, Meyer and Stein~\cite{CMS} for more recent accounts in the framework of real-variable harmonic analysis. This inequality states that for a function $f(t,x)$ and a measure $d\mu(t,x)$ in the upper half space $\R^{1+n}_+:= \sett{(t,x)}{t>0, x\in \R^n}$, one has the estimate
$$
  \iint_{\R^{1+n}_+} |f(t,x)| d\mu(t,x) \lesssim \sup_Q (\mu(\widehat Q)/|Q|)\, \int_{\R^n} N_*f(y) dy,
$$
where the supremum is over all cubes $Q$ in $\R^n$ and $\widehat Q:= (0, \ell(Q))\times Q$ is the 
{\em Carleson box}, $\ell(Q)$ being the sidelength of $Q$.
Furthermore $N_*$ denotes the {\em non-tangential maximal function}
$$
  (N_*f)(y):= \sup_{\sett{(t,x)}{|x-y|\le at}} |f(t,x)|, \qquad y\in\R^n,
$$
where $a>0$ is a fixed constant determining the aperture of the cone.
The exact value of $a$ is less important, since for any $a_1,a_2>0$
the corresponding non-tangential maximal functions $N_* f$ are comparable in $L_p(\R^n)$ norm for any $1\le p\le \infty$.
See Fefferman and Stein~\cite[Lem. 1]{FS}.

Carleson's inequality has numerous applications. Motivating for this paper is its applications to 
boundary value problems for elliptic partial differential equations. 
A recent application concerns boundary value problems for divergence form equations
$\divv_{t,x} A(t,x) \nabla_{t,x}u(t,x)=0$, with non-smooth coefficients
$
  A\in L_\infty(\R^{1+n}_+; \C^{(1+n)\times(1+n)})
$
with uniformly positive real part.
To solve the Neumann problem with $L_2(\R^n)$ boundary data,
Kenig and Pipher~\cite{KP} introduced (a space equivalent to) the function space 
$\mX$ consisting of functions $f(t,x)$, thought of as gradients of solutions $u(t,x)$, with $N_*(W_2 f)\in L_2(\R^n)$, where
$$
  (W_q f)(t,x):= {|W(t,x)|}^{-1/q} \|f\|_{L_q(W(t,x))}, \qquad (t,x)\in \R^{1+n}_+,
$$
is the $L_q$ {\em Whitney averaged function}, with 
$$
W(t,x):= \sett{(x,y)\in \R^{1+n}_+}{|y-x|<c_1t, c_0^{-1}<s/t<c_0}
$$ 
being the {\em Whitney region} around $(t,x)$.
(Again, the precise value of the fixed constants $c_0>1$ and $c_1>0$ is less important.) 
The reason for replacing $f$ by the Whitney average $W_2f$ is that, unlike the potential $u(t,x)$,
the gradient $f(t,x)=\nabla_{t,x}u(t,x)$ does not have classical interior pointwise DeGiorgi--Nash--Moser bounds.

In the recent works of one of the authors with P. Auscher~\cite{AA1, AA2}, the function space $\mX$
above is fundamental. In these papers, new methods are developed to solve the Neumann (as well as the Dirichlet) problems for systems
of divergence form equations, which rely on solving certain operator-valued singular integral equations 
in this functions space $\mX$. Two questions arose, which motived this paper.
\begin{itemize}
\item Which functions $g(t,x)$ are bounded multipliers 
$$
  \mX\to L_2(\R^{1+n}_+;dtdx): f(t,x) \mapsto g(t,x)f(t,x)\, ?
$$
It was shown \cite[lem. 5.5]{AA1}, using Carleson's inequality, that $g$ is a multiplier if
the modified Carleson norm
\begin{equation}    \label{eq:modCarl}
  \sup_Q \left( \frac 1{|Q|}\iint_{\widehat Q} W_\infty g(t,x)^2 dtdx\right)^{1/2}
\end{equation}
is finite. We show in this paper (Theorem~\ref{thm:nondyadicduality}) 
that this modified Carleson norm in fact is equivalent to the
multiplier norm
$$
\|g\|_{\mX\to L_2(\R^{1+n}_+; dtdx)} =\sup_{f\ne 0}(\|gf\|_{L_2(\R^{1+n}_+; dtdx)}/\|f\|_\mX).
$$
The modified Carleson norm \eqref{eq:modCarl} has been known for some time to be fundamental in the perturbation theory for divergence form equations. It was introduced already by Dahlberg~\cite{D2}.
See also Fefferman, Kenig and Pipher~\cite{FKP} and Kenig and Pipher~\cite{KP, KP2}.
\item
What is the dual, or predual, space of $\mX$? 
We show in this paper (Theorem~\ref{thm:nondyadicdualspaces}) 
that $\mX$ is the dual space of the space of 
functions $g(t,x)$ such that
$$
  \int_{\R^n} \left( \sup_{Q\ni z}\iint_{\widehat Q} W_2 g(t,x) dtdx  \right)^2 dz <\infty.
$$
(We here identify a function $f\in \mX$ with the functional $g\mapsto \iint_{\R^{1+n}_+}fg dtdx$.)
Theorem~\ref{thm:nondyadicdualspaces} also shows that the space $\mX$ is not reflexive.
The interest in understanding duality for the space $\mX$ comes from the dual relation between 
the Dirichlet problem with $L_2(\R^n)$ data and the Dirichlet problem with Sobolev $H^1(\R^n)$ data.
See \cite[Thm. 5.4]{KP} and \cite[Thm. 1.4]{AA2}.
\end{itemize}
 
Beyond these two results, we prove more general $L_p$ results for the Carleson duality.
On one hand, we consider not only $W_\infty g$ and $W_2 g$, but more general $L_q$ Whitney averages.
On the other hand, we measure the non-tangential maximal function and the Carleson functional in $L_p$ norms.
For example, this may have useful applications to boundary value problems with $L_p$ data.

In Section~\ref{sec:dyadic}, we first prove the corresponding results for a discrete vector-valued model of the Carleson duality.
 Then in Section~\ref{sec:nondyadic}, we prove equivalence between dyadic and non-dyadic norms, 
 which yields the non-dyadic results.

The spaces we consider here are closely related to the tent spaces introduced by Coifman, Meyer and Stein \cite{CMS}, and in fact reduce to them for certain choices of the parameters. However, as a whole, the scale of spaces that we consider is new. Since the precise connection to tent spaces is somewhat technical, we postpone a more detailed commentary until Remark \ref{rem:tentspaces} below.

{\bf Acknowledgments.}

This work was done during a visit by the first author to Link\"oping university in connection with a workshop on ``harmonic analysis and elliptic PDEs'', organised by the second author and funded through the Tage Erlander prize 2009, the Swedish Research Council and Nordforsk.
The first author was supported by the Academy of Finland, grants 130166, 133264 and 218148.

\section{A discrete vector-valued model}    \label{sec:dyadic}

In this section we study a dyadic model of the problem. We use the following notation.
Let $\mD= \bigcup_{j\in \Z}\mD_j$ denote the dyadic cubes in $\R^n$, where
$$
  \mD_j:= \sett{2^{-j}(0,1)^n+2^{-j}k}{k\in \Z^n}.
$$
Let $W_Q:= (\ell(Q)/2, \ell(Q))\times Q$ denote the dyadic Whitney region, being in one-to-one
correspondence with $Q\in \mD$.
Note that unlike their non-dyadic counterparts $W(t,x)$, the regions $W_Q$ form a disjoint partition 
of $\R^{1+n}_+$ (modulo zero-sets).
Define the dyadic Hardy--Littlewood maximal function
$$
  M_\mD h(x):= \sup_{Q: x\in Q\in \mD} \frac 1{|Q|}\int_Q h(y) dy, \qquad x\in\R^n,
$$
for $h\in L_1^{\text{loc}}(\R^n)$. Recall that $M_\mD$ is bounded on $L_p(\R^n)$, $1<p\le \infty$.

Our discrete vector-valued setup is as follows.
We assume that to each $Q\in \mD$, there are two associated Banach spaces $\mX_Q$ and $\mY_Q$.
For a sequence $f=(f_Q)_{Q\in\mD}$, where $f_Q\in \mX_Q$, we define its non-tangential maximal function
$$
  (N_\mX f)(x):= \sup_{Q: x\in Q\in\mD} \|f_Q\|_{\mX_Q}, \qquad x\in \R^n.
$$
For fixed $1\le p<\infty$, let $\mX_p$ denote the space of all sequences $f$ such that 
$\|f\|_{\mX_p}:= \|N_\mX f\|_{L_p(\R^n)}<\infty$.
For a sequence $g=(g_Q)_{Q\in\mD}$, where $g_Q\in \mY_Q$, we define the Carleson functional
$$
  (C_\mY g)(x):= \sup_{Q: x\in Q\in\mD} \frac 1{|Q|}\sum_{R\subset Q, R\in\mD} \|g_R\|_{\mY_R},\qquad x\in \R^n.
$$
For fixed $1< p'\le \infty$, let $\mY_{p'}$ denote the space of all sequences $g$ such that 
$\|g\|_{\mY_{p'}}:= \|C_\mY g\|_{L_{p'}(\R^n)}<\infty$. 
Note that the case $p'=1$ is not interesting, since $g=0$ necessarily if $\|C_\mY g\|_{L_1(\R^n)}<\infty$.

We assume that for each $Q\in\mD$ there is a duality $\scl{\mX_Q}{\mY_Q}$ as below, with constants $C$ uniformly bounded with respect to $Q$.

\begin{defn}    \label{defn:duality}
Let $\mX$ and $\mY$ be two Banach spaces. By a {\em duality} $\scl{\mX}{\mY}$, we mean
a bilinear map $\mX\times \mY\ni (f,g)\mapsto \scl fg\in\R$ and a constant $0<C<\infty$
such that
\begin{gather*}
  |\scl fg|\le C \|f\|_\mX\|g\|_\mY, \qquad f\in \mX, \, g\in \mY, \\
  \|f\|_\mX \le C \sup_{\|g\|_\mY=1}\scl fg, \qquad f\in \mX, \\
  \|g\|_\mY \le C \sup_{\|f\|_\mX=1}\scl fg, \qquad g\in \mY.
\end{gather*}
\end{defn}

We prove the following duality result.

\begin{thm}    \label{thm:dyadic}
Let $(\mX_Q)_{Q\in\mD}$ and $(\mY_Q)_{Q\in\mD}$ be pairwise dual Banach spaces as above, 
and let $1/p+1/p'=1$, $1\le p<\infty$.
Then there is a constant $0<C<\infty$ such that 
\begin{gather*}
  \sum_{Q\in\mD} |\scl{f_Q}{g_Q}|\le C \|N_\mX f\|_{L_p(\R^n)}\|C_\mY g\|_{L_{p'}(\R^n)}, \qquad f_Q\in \mX_Q, \, g_Q\in \mY_Q, \\
  \|N_\mX f\|_{L_p(\R^n)} \le C \sup_{\|C_\mY g\|_{L_{p'}(\R^n)}=1} \sum_{Q\in\mD} \scl{f_Q}{g_Q}, \qquad f_Q\in \mX_Q, \\
  \|C_\mY g\|_{L_{p'}(\R^n)} \le C \sup_{\|N_\mX f\|_{L_p(\R^n)}=1} \sum_{Q\in\mD} \scl{f_Q}{g_Q}, \qquad g_Q\in \mY_Q.
\end{gather*}
\end{thm}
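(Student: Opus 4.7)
The plan is to prove the three inequalities in order, combining the cube-wise duality $\scl{\mX_Q}{\mY_Q}$ with three global arguments.

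\textbf{First estimate (Carleson inequality).} First replace $|\scl{f_Q}{g_Q}|$ by $\|f_Q\|_{\mX_Q}\|g_Q\|_{\mY_Q}$ via the cube-wise duality, then apply a layer-cake decomposition in the $\mX_Q$-norm. For each level $\lambda$ the cubes $Q$ with $\|f_Q\|_{\mX_Q}>\lambda$ lie in $\{N_\mX f>\lambda\}$, which I would decompose into its maximal dyadic subcubes $\{Q_j^\lambda\}$; by the very definition of $C_\mY g$,
$$
\sum_{R\subset Q_j^\lambda}\|g_R\|_{\mY_R}\le|Q_j^\lambda|\inf_{Q_j^\lambda}C_\mY g\le\int_{Q_j^\lambda}C_\mY g\,dx,
$$
so integrating in $\lambda$ yields $\sum_Q|\scl{f_Q}{g_Q}|\lesssim\int N_\mX f\cdot C_\mY g\,dx$ and H\"older closes the estimate.

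\textbf{Second estimate.} Given $f$ and $\epsilon>0$, pick $h\ge 0$ with $\|h\|_{L_{p'}}=1$ almost saturating the $L_p$-norm of $N_\mX f$, and measurably select for each $x$ a dyadic cube $Q^*(x)\ni x$ with $\|f_{Q^*(x)}\|_{\mX_{Q^*(x)}}\ge(1-\epsilon)N_\mX f(x)$. Set $w_Q:=\int_{\{Q^*=Q\}}h\,dx$ and $g_Q:=w_Qg_Q^*$, where $g_Q^*\in\mY_Q$ of unit norm is chosen by the cube-wise duality with $\scl{f_Q}{g_Q^*}\gtrsim\|f_Q\|_{\mX_Q}$. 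Then $\sum_Q\scl{f_Q}{g_Q}\gtrsim\|N_\mX f\|_{L_p}$, while the inclusion $\{x:Q^*(x)\subset Q\}\subset Q$ gives $\sum_{R\subset Q}w_R\le\int_Q h$, so
$$
C_\mY g(y)\le\sup_{Q\ni y}\frac{1}{|Q|}\int_Q h\,dx=M_\mD h(y),
$$
and $\|C_\mY g\|_{L_{p'}}\lesssim 1$ by the $L_{p'}$-boundedness of $M_\mD$ (valid for all $1<p'\le\infty$, with the endpoint $p'=\infty$ being trivial). Rescaling and letting $\epsilon\to 0$ yields the claim.

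\textbf{Third estimate.} Write $b_R:=\|g_R\|_{\mY_R}$, $A_R:=|R|^{-1}\sum_{R'\subset R}b_{R'}$, $G:=C_\mY g$. The central lemma is that every maximal dyadic cube $Q^\lambda$ in $\{G>\lambda\}$ satisfies $A_{Q^\lambda}>\lambda$: otherwise each $x\in Q^\lambda$ would be covered by a strictly smaller witnessing subcube (since any dyadic ancestor containing such a point would contradict maximality), and the maximal disjoint subcollection of such cubes would tile $Q^\lambda$, forcing $A_{Q^\lambda}>\lambda$ by summation. This implies $\lambda|\{G>\lambda\}|\le\sum_{R\subset\{G>\lambda\}}b_R$, and a layer-cake computation on $G^{p'}$ combined with $R\subset\{G>\lambda\}\iff\inf_R G>\lambda$ upgrades this to $\int G^{p'}\lesssim\sum_R b_R(\inf_R G)^{p'-1}$ whenever $1<p<\infty$. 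Choosing $f_R^*\in\mX_R$ of unit norm with $\scl{f_R^*}{g_R}\gtrsim b_R$ by duality and setting $f_R:=(\inf_R G)^{p'-1}f_R^*$, the pointwise bound $\inf_R G\le G(y)$ for $y\in R$ gives $N_\mX f\le G^{p'-1}$, so $\|N_\mX f\|_{L_p}\le\|G\|_{L_{p'}}^{p'-1}$ (using $(p'-1)p=p'$), while $\sum_R\scl{f_R}{g_R}\gtrsim\|G\|_{L_{p'}}^{p'}$; renormalizing gives (3). The endpoint $p=1$ is handled separately by testing $A_{Q_0}$ against the atom $f_R:=|Q_0|^{-1}\mathbf 1_{R\subset Q_0}f_R^*$ and taking a supremum over $Q_0$.

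\textbf{Main obstacle.} The most delicate step is the stopping lemma underlying the third estimate, which pinpoints why maximal dyadic cubes in level sets of $C_\mY g$ already absorb enough Carleson mass to support a clean layer-cake duality at the vector-valued level. Once this is in hand, the rest reduces to careful bookkeeping with the cube-wise pairings, the partition by $\{Q^*=Q\}$, and the $L_{p'}$-boundedness of the dyadic maximal function.
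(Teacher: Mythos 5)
Your proposal is correct, and while your first and third estimates essentially reproduce the paper's argument, your second estimate takes a genuinely different and in fact shorter route. For the first inequality you use the same layer-cake decomposition over maximal cubes of $\{N_\mX f>\lambda\}$ as the paper. For the third, your stopping lemma (maximal dyadic cubes of $\{C_\mY g>\lambda\}$ satisfy $A_{Q^\lambda}>\lambda$) is the same fact the paper extracts from its selection of maximal cubes with $A_Q>2^k$; your one deviation is to weight the test sequence by $(\inf_R C_\mY g)^{p'-1}$ rather than by the average $\bigl(\frac1{|R|}\int_R C_\mY g\bigr)^{p'-1}$, which gives $N_\mX f\le (C_\mY g)^{p'-1}$ pointwise and so avoids the appeal to the $L_p$-boundedness of $M_\mD$ that the paper needs at this step. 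The real divergence is in the second inequality: the paper runs a layer-cake argument producing $\hat g_Q=|Q|\sum_{k:Q\in\mD^k}2^{k(p-1)}$ for $1<p<\infty$, and then needs a separate, rather delicate stopping-time construction (the collections $\mD^f_1,\mD^f_2$ and the sets $E(Q)$) for $p=1$; you instead dualize $L_p$ against a weight $h\in L_{p'}$, linearize $N_\mX f$ by a measurable selection $x\mapsto Q^*(x)$ of near-optimal cubes, and transfer the Carleson bound to $M_\mD h$ via $\sum_{R\subset Q}w_R\le\int_Q h$. This buys a unified treatment of $p=1$ (take $h\equiv 1$) and $1<p<\infty$, at the modest cost of the routine verifications you leave implicit: measurability of the selection $Q^*$ (harmless, since the dyadic cubes containing a point form a countable chain), and the usual truncations when $\|N_\mX f\|_{L_p}$ is $0$ or $\infty$ so that the renormalizations in your second and third steps are legitimate.
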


The application we have in mind is the following.
For functions $f(t,x)$ in $\R^{1+n}_+$, let $f_Q:= f|_{W_Q}\in L_q(W_Q)=: \mX_Q$, 
where the Banach space has norm
$\|f\|_{\mX_Q}:= |W_Q|^{-1/q}\|f_Q\|_{L_q(W_Q)}$ so that
$$
  N_{L_q}f= \sup_{Q\ni x, Q\in\mD} |W_Q|^{-1/q} \|f\|_{L_q(W_Q)}.
$$
For functions $g(t,x)$ in $\R^{1+n}_+$, let $g_Q:= g|_{W_Q}\in L_{\tilde q}(W_Q)=: \mY_Q$, 
where the Banach space has norm
$\|g\|_{\mY_Q}:= |W_Q|^{1-1/\tilde q}\|g_Q\|_{L_{\tilde q}(W_Q)}$ so that
$$
  C_{L_{\tilde q}}f= \sup_{Q\ni x, Q\in\mD}\frac 1{|Q|} 
  \sum_{R\subset Q, R\in\mD} |W_R|^{1-1/\tilde q} \|g\|_{L_{\tilde q}(W_R)}.
$$
We generalize slightly the Carleson functional and define
$$
  C^r_{L_{\tilde q}}f(x)=  \sup_{Q: x\in Q\in\mD}\left( \frac 1{|Q|}
  \sum_{R\subset Q, R\in\mD}|W_R| \Big(|W_R|^{-1/\tilde q} \|g\|_{L_{\tilde q}(W_R)}\Big)^r \right)^{1/r},
$$
for $x\in \R^n$ and $1\le r<\infty$.

\begin{cor}   \label{cor:dyadicwithr}
Let $1/p+1/\tilde p=1/q+1/\tilde q=1/r$, with $r\le p<\infty$, $r\le q\le \infty$, $1\le r<\infty$.
Then there is a constant $0<C<\infty$ such that 
\begin{gather*}
  \|fg\|_{L_r(\R^{1+n}_+)} \le C \|N_{L_q} f\|_{L_p(\R^n)} \|C^r_{L_{\tilde q}} g\|_{L_{\tilde p}(\R^n)}, \\
 \|N_{L_q} f\|_{L_p(\R^n)} \le C \sup_{\|C^r_{L_{\tilde q}} g\|_{L_{\tilde p}(\R^n)}=1} \|fg\|_{L_r(\R^{1+n}_+)}, \\
 \|C^r_{L_{\tilde q}} g\|_{L_{\tilde p}(\R^n)} \le C \sup_{ \|N_{L_q} f\|_{L_p(\R^n)} =1}  \|fg\|_{L_r(\R^{1+n}_+)}.
\end{gather*}
\end{cor}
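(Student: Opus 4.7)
The plan is to deduce the corollary from Theorem \ref{thm:dyadic} by the change of variable $f \mapsto |f|^r$. Set $F := |f|^r$ and $G := |g|^r$; their restrictions to a Whitney region $W_Q$ lie in $L_{q/r}(W_Q)$ and $L_{\tilde q/r}(W_Q)$ respectively (both exponents being $\ge 1$ since $r \le q, \tilde q$), and the relation $1/q + 1/\tilde q = 1/r$ makes $q/r$ and $\tilde q/r$ Hölder conjugates. I therefore take
$$
  \mX_Q := L_{q/r}(W_Q), \qquad \mY_Q := L_{\tilde q/r}(W_Q),
$$
with the rescaled norms $\|\cdot\|_{\mX_Q} := |W_Q|^{-r/q}\|\cdot\|_{L_{q/r}(W_Q)}$ and $\|\cdot\|_{\mY_Q} := |W_Q|^{1-r/\tilde q}\|\cdot\|_{L_{\tilde q/r}(W_Q)}$, and with pairing $\scl{F_Q}{G_Q} := \int_{W_Q} F_Q G_Q\, dt\, dx$. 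Since the product of the two rescaling factors is $|W_Q|^{1 - r(1/q+1/\tilde q)} = 1$, Hölder's inequality and its sharpness deliver a duality in the sense of Definition \ref{defn:duality} with a constant independent of $Q$.

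Next, I apply Theorem \ref{thm:dyadic} with exponents $P := p/r$ and $P' := \tilde p/r$, which are Hölder conjugates (since $1/p + 1/\tilde p = 1/r$) and satisfy $1 \le P < \infty$ and $1 < P' \le \infty$ because $r \le p < \infty$. A direct calculation from the definitions gives
$$
  N_\mX F(x) = (N_{L_q} f(x))^r, \qquad C_\mY G(x) = (C^r_{L_{\tilde q}} g(x))^r,
$$
so that $\|N_\mX F\|_{L_P(\R^n)} = \|N_{L_q} f\|_{L_p(\R^n)}^r$ and $\|C_\mY G\|_{L_{P'}(\R^n)} = \|C^r_{L_{\tilde q}} g\|_{L_{\tilde p}(\R^n)}^r$. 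Since the dyadic Whitney regions $W_Q$ partition $\R^{1+n}_+$ up to null sets,
$$
  \sum_{Q \in \mD} \scl{F_Q}{G_Q} = \int_{\R^{1+n}_+} |fg|^r\, dt\, dx = \|fg\|_{L_r(\R^{1+n}_+)}^r.
$$
Taking $r$-th roots in the three estimates of Theorem \ref{thm:dyadic} then yields the three inequalities of the corollary.

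The one point worth verifying is that Theorem \ref{thm:dyadic} produces the reverse inequalities as suprema over all sequences in $\prod_Q \mX_Q$ or $\prod_Q \mY_Q$, whereas in the corollary the suprema are over functions, i.e.\ over sequences of the restricted form $F_Q = |f|^r$ and $G_Q = |g|^r$. This restriction is harmless: replacing a sequence with its pointwise absolute values preserves its $N_\mX$ or $C_\mY$ norm while only increasing its pairing against a nonnegative sequence, so the full suprema are attained on nonnegative sequences; and every nonnegative element of $L_{\tilde q/r}(W_Q)$ equals $|g|^r$ for $g := G_Q^{1/r} \in L_{\tilde q}(W_Q)$, and similarly for $\mX_Q$. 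I expect no new estimate beyond Theorem \ref{thm:dyadic} to be needed, and the only real work lies in the bookkeeping of the exponents and volume factors above.
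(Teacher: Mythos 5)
Your proposal is correct and is essentially the paper's own argument: the authors reduce to $r=1$ by replacing $|f|^r,|g|^r$ with $f,g$ and then invoke Theorem~\ref{thm:dyadic} with $\mX_Q=L_{q/r}(W_Q)$, $\mY_Q=L_{\tilde q/r}(W_Q)$ and exponents $p/r$, $\tilde p/r$, exactly as you do. Your explicit verification of the volume normalizations, of the identities $N_\mX F=(N_{L_q}f)^r$ and $C_\mY G=(C^r_{L_{\tilde q}}g)^r$, and of the reduction of the suprema to nonnegative sequences of the form $|f|^r$, $|g|^r$ simply fills in bookkeeping that the paper leaves implicit.
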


Note that the case $p=q=r=2$ solves a dyadic version of the multiplier question for the space $\mX$ from the introduction. In this case $\tilde p= \tilde q=\infty$.
Note also that the case $p=q=2, r=1$, together with Theorem~\ref{thm:fulldyadicduality} below, solves a dyadic version of the dual space question for the space $\mX$ from the introduction. In this case $\tilde p= \tilde q=2$.

\begin{proof} 
  Replacing $|f|^r$, $|g|^r$ by $f, g$, we see that it suffices to consider the case $r=1$.
  In this case, the result follows from Theorem~\ref{thm:dyadic}.
\end{proof}

\begin{proof}[Proof of Theorem~\ref{thm:dyadic}]
(i)
For completeness, we start with the well-known proof of the $\sum_Q|\scl {f_Q}{g_Q}|$ estimate.
It suffices to estimate $\sum_Q \|f_Q\| \|g_Q\|$. 
Note that 
$$
\sum_{R\subset Q}\|g_R\|\le |Q| \inf_{x\in Q} C_\mY g(x)\le \int_Q C_\mY g,
$$
for any $Q\in \mD$.
Select, for given $k\in\Z$, the maximal dyadic cubes $\mD^k\subset \mD$ such that $\|f_Q\|>2^k$.
Then $\bigcup_{Q\in\mD^k} Q= \sett{x\in\R^n}{N_\mX f(x)>2^k}$, and the cubes in $\mD^k$ are disjoint.
We get
$$
  \sum_{Q: \|f_Q\|>2^k} \|g_Q\|\le \sum_{Q\in\mD^k} \sum_{R\subset Q} \|g_R\|\le \sum_{Q\in \mD^k}\int_Q C_\mY g= \int_{x: N_\mX f(x)>2^k} C_\mY g,
$$
and hence
\begin{multline*}
  \sum_{Q\in \mD} \|f_Q\|\|g_Q\|\approx \sum_{Q\in \mD}\sum_{k: 2^k<\|f_Q\|} 2^k \|g_Q\|
  = \sum_{k\in\Z} 2^k \sum_{Q: \|f_Q\|>2^k} \|g_Q\| \\
  \le  \sum_{k\in\Z} 2^k  \int_{x: N_\mX f(x)>2^k} C_\mY g
  = \int_{\R^n}\sum_{k: 2^k< N_\mX f(x)} 2^k C_\mY g \\
  \approx \int_{\R^n} N_\mX f C_\mY g \le \|N_\mX f\|_p \|C_\mY g\|_{p'}.
\end{multline*}

(ii)
Next we prove the estimate of $\|C_\mY g\|_{p'}$.
Consider first the case $p'=\infty$.
Pick $Q\in\mD$ such that $\tfrac 1{|Q|} \sum_{R\subset Q}\|g_R\| \ge \tfrac 12 \|C_\mY g\|_{\infty}$.
Then construct $f=(f_R)_{R\in\mD}$, choosing $f_R\in \mX_R$ such that $\|f_R\|=1/|Q|$, $\|g_R\|/|Q|\approx \scl{f_R}{g_R}$ if $R\subset Q$, and $f_R:= 0$ if $R\not\subset Q$.
It follows that $\|C_\mY g\|_{\infty}\approx \sum_R \scl{f_R}{g_R}$ and $\|N_\mX f\|_1=1$ since $N_\mX f=1/|Q|$ on $Q$ and $N_\mX f=0$ off $Q$.

Next consider the case $1<p'<\infty$.
Select, for given $k\in \Z$, the maximal dyadic cubes $\mD^k\subset \mD$ such that 
$\tfrac 1{|Q|}\sum_{R\subset Q}\|g_R\|>2^k$. Then
$\sett{x\in \R^n}{C_\mY g(x)>2^k}=\bigcup_{Q\in \mD^k}Q$, and the cubes in $\mD^k$ are disjoint.
We obtain
$$
  |\sett{x}{C_\mY g(x)>2^k}|=\sum_{Q\in \mD^k} |Q| \le 2^{-k} \sum_{Q\in\mD^k}\sum_{R\subset Q} \|g_R\|.
$$
Now let $\hat f_R:= \tfrac 1{|R|}\int_R C_\mY g$. Note that $\hat f_R$ does not depend on $k$,
and that $\hat f_R>2^k$ for $R\subset Q\in \mD^k$.
We get $|\sett{x}{C_\mY g(x)>2^k}|\le 2^{-k} \sum_{R: \hat f_R>2^k}\|g_R\|$
and
$$
  \|C_\mY g\|_{p'}^{p'} \approx \sum_{k\in\Z} 2^{p'k} |\sett{x}{C_\mY g(x)>2^k}|
  \le \sum_{R\in\mD} \sum_{k: 2^k <\hat f_R} 2^{(p'-1)k}\|g_R\|\approx \sum_{R\in\mD} (\hat f_R)^{p'-1} \|g_R\|.
$$
Now construct $f=(f_R)_{R\in\mD}$, choosing $f_R\in \mX_R$ such that $\|f_R\|=(\hat f_R)^{p'-1}$ and
$(\hat f_R)^{p'-1} \|g_R\|\approx \scl{f_R}{g_R}$.
We get $N_\mX f(x)= \sup_{Q\ni x}(\hat f_Q)^{p'-1}= (M_\mD(C_\mY g)(x))^{p'-1}$.
Since $p(p'-1)=p'$, this gives
$$
  \|N_\mX f\|_p^p= \|M_\mD(C_\mY g)\|_{p'}^{p'}\lesssim \|C_\mY g\|_{p'}^{p'},
$$
and we conclude that
$$
  \sum_Q\scl{f_Q}{g_Q}\gtrsim \|C_\mY g\|_{p'}^{p'} \gtrsim \|C_\mY g\|_{p'} \|N_\mX f\|_p.
$$

(iii)
Next we prove the estimate of $\|N_\mX f\|_{p}$.
Consider first the case $1<p<\infty$.
Select, for given $k\in \Z$, the maximal dyadic cubes $\mD^k\subset \mD$ such that 
$\|f_Q\|>2^k$. Then
$\sett{x\in \R^n}{N_\mX f(x)>2^k}=\bigcup_{Q\in \mD^k}Q$, and the cubes in $\mD^k$ are disjoint.
Write $k_Q:= \max_{Q\in\mD^k} k\le \log_2 \|f_Q\|$.
We obtain
\begin{multline*}
  \|N_\mX f\|_p^p\approx \sum_{k\in\Z} 2^{kp} |\sett{x}{N_\mX f(x)>2^k}| 
  = \sum_{Q\in\mD} |Q| \sum_{k: Q\in \mD^k} 2^{kp} \\
  \approx \sum_{Q\in\mD} |Q| 2^{k_Qp} 
  = \sum_{Q\in\mD} 2^{k_Q} |Q|  2^{k_Q(p-1)}
  \approx \sum_{Q\in\mD} \|f_Q\|\left( |Q| \sum_{k: Q\in \mD^k} 2^{k(p-1)} \right).
\end{multline*}
Write $\hat g_Q:=  |Q| \sum_{k: Q\in \mD^k} 2^{k(p-1)}$ and construct 
$g=(g_Q)_{Q\in\mD}$, choosing $g_Q\in \mY_Q$ such that $\|g_Q\|=\hat g_Q$ and
$\|f_Q\|\|g_Q\|\approx \scl{f_Q}{g_Q}$.
Then
\begin{multline*}
  \frac 1{|Q|}\sum_{R\subset Q}\|g_R\|\lesssim \sum_{k\in\Z} 2^{k(p-1)}\frac 1{|Q|}\sum_{R\subset Q, R\in \mD^k} |R| \\
  =  \sum_{k\in\Z}2^{k(p-1)} \frac 1{|Q|} |\sett{x}{N_\mX f(x)>2^k}\cap Q| \\
  \approx \frac 1{|Q|}\int_Q (N_\mX f)^{p-1}\le \inf_Q M_\mD ((N_\mX f)^{p-1}),
\end{multline*}
and therefore $\|C_\mY g\|_{p'}^{p'}\lesssim \|(N_\mX f)^{p-1}\|_{p'}^{p'}= \|N_\mX f\|_p^p$,
since $p'(p-1)=p$.
We conclude that
$$
  \sum_{Q\in\mD}\scl{f_Q}{g_Q}\gtrsim \|N_\mX f\|_{p}^{p} \gtrsim \|C_\mY g\|_{p'} \|N_\mX f\|_p.
$$

(iii')
We finally prove the estimate of $\|N_\mX f\|_1$, i.e. the case $p=1$.
Let $\mD^0$ be the $2^n$ dyadic cubes with sidelength $2^M$ and one corner at the origin,
where $M$ is chosen large enough, using the monotone convergence theorem, so that
$\|N_\mX \tilde f\|_1\ge \tfrac 12 \|N_\mX f\|_1$, where $\tilde f_Q:=f_Q$ if 
$Q\subset Q_0$ for some $Q_0\in\mD^0$, and $\tilde f_Q:= 0$ otherwise.
Assuming the estimate proved for $\tilde f$, we have
$$
  \|N_\mX \tilde f\|_1\lesssim \sum_Q \scl{\tilde f_Q}{g_Q}/\|C_\mY g\|_\infty,
$$
where we may assume $g_Q=0$ unless $Q\subset Q_0$ for some $Q_0\in\mD^0$.
This yields $\|N_\mX f\|_1\le 2\|N_\mX \tilde f\|_1\lesssim \sum_Q \scl{\tilde f_Q}{g_Q}/\|C_\mY g\|_\infty
\lesssim \sum_Q\scl{f_Q}{g_Q}/\|C_\mY g\|_\infty$.
Thus, replacing $f$ by $\tilde f$, we may assume that $f_Q=0$ unless $Q\subset Q_0$ for some $Q_0\in\mD^0$.

Given $f$ contained by $\mD^0$ as above, we define recursively sets of disjoint dyadic cubes $\mD^j\subset \mD$, $j=1,2,3,\ldots$, as follows.
Having constructed $\mD^j$, let $Q\in \mD^j$.
Define $\mD^{j+1}_Q$ to be the set of maximal dyadic cubes $R\in\mD$ such that $R\subset Q$ and $\|f_R\|> 2 \|f_Q\|$. Then let $\mD^{j+1}:= \bigcup_{Q\in\mD^j}\mD^{j+1}_Q$.
Furthermore, let $\mD^f:= \bigcup_{j}\mD^j$ and 
$$
   E(Q):= Q\setminus \bigcup_{R\in\mD^{j+1}_Q} R, \qquad Q\in \mD^j.
$$

From the above construction, if $x\in Q_k\subset Q_{k-1}\subset\ldots \subset Q_0$, where $Q_j\in\mD^j$, then $\|f_{Q_k}\|>2^{k-1}\|f_{Q_1}\|$, $k=2,3,\ldots$, where $\|f_{Q_1}\|>0$.
Hence, if $N_\mX f(x)<\infty$, then there is a minimal $Q\ni x$, $Q\in\mD^f$. For this $Q$, we have 
$x\in E(Q)$ and $N_\mX f(x)\le 2 \|f_Q\|$. Thus
\begin{equation}  \label{eq:anestimateofn}
  N_\mX f\le 2 \sum_{Q\in \mD^f} \|f_Q\| \, 1_{E(Q)}\qquad \text{a.e.}
\end{equation}
so that $\|N_\mX f\|_1 \le 2\sum_{Q\in \mD^f} \|f_Q\| |Q|$.
Conversely, if $x\in Q_k\subset Q_{k-1}\subset\ldots \subset Q_0$, where $Q_j\in\mD^j$,  are all the selected dyadic cubes containing $x$, then $N_\mX f(x)\ge \|f_{Q_k}\|\ge 2 \|f_{Q_{k-1}}\|\ge \ldots \ge 2^k\|f_{Q_0}\|$.
Thus
$$
\sum_{Q\in \mD^f} \|f_Q\| |Q|  = \int_{\R^n} \sum_{Q\in \mD^f, Q\ni x}\|f_Q\| \le 
 \int N_\mX f  \sum_{j=0}^\infty 2^{-j} \le 2\|N_\mX f\|_1.
$$ 

Now let $c\in(0,1)$ be a constant, to be chosen below, and define
$$
  \mD^f_1:= \sett{Q\in \mD^f}{|E(Q)|>c|Q| }\qquad\text{and}\qquad \mD^f_2:= \mD^f\setminus \mD^f_1.
$$
From \eqref{eq:anestimateofn} we have 
$$
  \|N_\mX f\|_1 \le 2\sum_{Q\in \mD^f_1} \|f_Q\| |Q| + 2 c\sum_{Q\in \mD^f_2} \|f_Q\| |Q|
  \le 2\sum_{Q\in \mD^f_1} \|f_Q\| |Q| + 4c \|N_\mX f\|_1.
$$
Choose $c=1/8$ to obtain
$\|N_\mX f\|_1 \le 4\sum_{Q\in \mD^f_1} \|f_Q\| |Q|$.
Construct $g=(g_Q)_{Q\in\mD}$, choosing $g_Q\in\mY_Q$ such that $\|g_Q\|= |Q|$ and $\scl{f_Q}{g_Q}\approx \|f_Q\| |Q|$ if $Q\in \mD^f_1$, and $g_Q:= 0$ otherwise.
Then $\|N_\mX f\|_1 \lesssim\sum_{Q\in \mD^f_1} \scl{f_Q}{g_Q}$.
To estimate
$$
  \frac 1{|Q|} \sum_{R\subset Q}\|g_R\|=  \frac 1{|Q|} \sum_{R\subset Q,R\in\mD^f_1}|R|,
$$
note that if $R\in\mD^f_1\cap\mD^j$, then $\sum_{R'\in\mD^{j+1}_R}|R'|\le 7/8 |R|$.
Thus
$$
\frac 1{|Q|} \sum_{R\subset Q,R\in\mD^f_1}|R| \le \frac 1{|Q|}\sum_{j=0}^\infty (7/8)^j|Q|= 8.
$$
Thus $\|C_\mY g\|_\infty\le 8$.
This completes the proof of the theorem.
\end{proof}

Consider now a duality $\scl\mX\mY$ between two Banach spaces $\mX$ and $\mY$ as in Definition~\ref{defn:duality}. We define the linear map $L: \mX\to \mY^*$ sending $f\in\mX$ to the linear functional
$$
  \Lambda_f: \mY\to \R: g\mapsto \scl fg.
$$
The estimate $|\scl fg|\le C \|f\|_\mX\|g\|_\mY$ shows that $\|L\|_{\mX\to\mY^*}\le C$, whereas it follows from
the estimate $ \|f\|_\mX \le C \sup_{\|g\|_\mY=1}\scl fg$ shows that $L$ is injective with closed range $L(\mX)\subset \mY^*$. Thus the duality gives a topological, but not in general isometric, identification, through $L$, of $\mX$ with a closed subspace $L(\mX)$ of $\mY^*$.
The estimate $\|g\|_\mY \le C \sup_{\|f\|_\mX=1}\scl fg$ furthermore shows that this subspace is ``large'' in the sense that its pre-annihilator is
$$
  {}^{\perp}L(\mX):= \sett{g\in\mY}{\Lambda g=0\text{ for all } \Lambda\in L(\mX)}=\{0\}.
$$
In general we may have that $L(\mX)\subsetneqq \mY^*$, but if $\mY$ is reflexive, then necessarily $L(\mX)=\mY^*$.
Below we identify $\mX$ and $L(\mX)$, and thus write $\mX= \mY^*$ if $L(\mX)=\mY^*$.
We also note that the above also holds with the roles of $\mX$ and $\mY$ interchanged, giving an identification of $\mY$ with a closed subspace of $\mX^*$.

The following result describes when the duality in Theorem~\ref{thm:dyadic} gives the full dual spaces.

\begin{thm}   \label{thm:fulldyadicduality}
  With the above notation, consider the duality $\scl{\mX_p}{\mY_{p'}}$,
$$
  f,g\mapsto \sum_{Q\in\mD} \scl{f_Q}{g_Q}
$$
from Theorem~\ref{thm:dyadic}. We have 
$\mY_{p'}\subsetneqq \mX_p^*$ for any $1\le p<\infty$, as well as $\mX_1\subsetneqq \mY_\infty^*$.

If furthermore the duality $\scl{\mX_Q}{\mY_Q}$ is such that $\mX_Q=\mY_Q^*$ for all $Q\in\mD$, and if $1<p<\infty$, then $\mX_p=\mY_{p'}^*$.
\end{thm}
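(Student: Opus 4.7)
The inclusions $\mY_{p'}\hookrightarrow\mX_p^*$ and $\mX_1\hookrightarrow\mY_\infty^*$ are the natural injections $g\mapsto(f\mapsto\sum_Q\scl{f_Q}{g_Q})$; their boundedness is the first estimate of Theorem~\ref{thm:dyadic}, and injectivity follows by testing against sequences concentrated on a single cube using the duality on each $\mY_Q$. The two remaining tasks are (a) the equality $\mX_p=\mY_{p'}^*$ assuming $\mX_Q=\mY_Q^*$ and $1<p<\infty$, and (b) the strictness in the other cases.

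For (a), given $\Lambda\in\mY_{p'}^*$, I would define the prospective representative $f_Q\in\mX_Q=\mY_Q^*$ by $\scl{f_Q}{h}:=\Lambda(\delta_Q h)$, where $\delta_Q h$ is the sequence with value $h$ at $Q$ and $0$ elsewhere. A direct computation of $C_\mY(\delta_Q h)$, in terms of the smallest common ancestor cube of $x$ and $Q$, yields $\|\delta_Q h\|_{\mY_{p'}}\sim\|h\|_{\mY_Q}|Q|^{-1/p}$ for $p'>1$, so $f_Q$ is well-defined with $\|f_Q\|_{\mX_Q}\lesssim\|\Lambda\|\,|Q|^{-1/p}$. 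To control $N_\mX f$, I would truncate $f$ to a finite set $\mD^{(N)}$ of cubes exhausting $\mD$, apply the second estimate of Theorem~\ref{thm:dyadic} to $f^{(N)}$ (the testing sequence $g$ may be taken with support in $\mD^{(N)}$, so $\sum_Q\scl{f_Q^{(N)}}{g_Q}=\Lambda(g)$, bounded by $\|\Lambda\|$), and pass to the limit via monotone convergence $N_\mX f^{(N)}\nearrow N_\mX f$ to obtain $\|f\|_{\mX_p}\le C\|\Lambda\|$. The identity $\Lambda(g)=\sum_Q\scl{f_Q}{g_Q}$, which holds by construction on finitely supported $g$, extends to all $g\in\mY_{p'}$ by density: truncating $g$ to $\mD^{(N)}$, the error $C_\mY(g-g^{(N)})$ is dominated by $C_\mY g\in L^{p'}$ and tends to zero pointwise, so dominated convergence (using $p'<\infty$) gives norm convergence.

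For (b), the strictness $\mX_1\subsetneq\mY_\infty^*$ is the familiar $L^1\subsetneq(L^\infty)^*$ phenomenon: taking disjoint cubes $Q_k$ with $|Q_k|\to\infty$ and $g^{(k)}=\delta_{Q_k}h_k$ with $\|h_k\|_{\mY_{Q_k}}=|Q_k|$ (so $C_\mY g^{(k)}\ge\mathbf 1_{Q_k}$), any tail $\sum_{k\ge K}g^{(k)}$ stays bounded away from every finitely supported sequence in $\mY_\infty$-norm, and Hahn--Banach then produces a nonzero $\Lambda\in\mY_\infty^*$ vanishing on every $\delta_Q h$; such $\Lambda$ cannot be represented by any $f\in\mX_1$ since all its components would be forced to vanish. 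For $\mY_{p'}\subsetneq\mX_p^*$, under the hypotheses of (a) one has $\mX_p^*=\mY_{p'}^{**}$ and strictness reduces to non-reflexivity of $\mY_{p'}$, which is witnessed by a nested-cube-chain construction on which the Carleson supremum is saturated; in the general case, an analogous Hahn--Banach construction directly on the $\mX_p$-side produces functionals not represented by any $g\in\mY_{p'}$. The main obstacle I foresee is the pointwise convergence in the density argument of (a): for fixed $x$ one must rule out persistent contributions to $C_\mY(g-g^{(N)})(x)$ from ancestor cubes of $x$ straddling the scale/location window, which requires a careful decomposition of the defining sup into near- and far-scale parts and a tail bound using the finiteness of $C_\mY g(x)$.
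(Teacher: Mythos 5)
Your part (a) follows the paper's argument essentially verbatim: define $f_Q\in\mX_Q=\mY_Q^*$ from $\Lambda(\delta_Q\cdot)$, get $\|N_\mX f\|_p\lesssim\|\Lambda\|$ by testing finitely supported truncations against Theorem~\ref{thm:dyadic} and using monotone convergence, and extend the representation by density of finitely supported sequences in $\mY_{p'}$ for $p'<\infty$. The "main obstacle" you flag at the end is real and is exactly what the paper isolates as Lemma~\ref{lem:dense}, whose proof splits the Carleson supremum into far/large cubes and small cubes and uses a Lebesgue-differentiation-type statement (Lemma~\ref{lem:zerotrace}) for the small scales; your sketch of how to handle it is consistent with that. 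Your proof of $\mX_1\subsetneqq\mY_\infty^*$ (disjoint cubes $Q_k$, $\|h_k\|_{\mY_{Q_k}}=|Q_k|$, non-density of finitely supported sequences, Hahn--Banach) is correct and a legitimate variant of the paper's construction, which instead uses a nested chain of cubes and a Banach limit; both exploit that a functional represented by $f\in\mX_1$ and vanishing on all $\delta_Qh$ forces $f=0$.

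The genuine gap is the strictness $\mY_{p'}\subsetneqq\mX_p^*$. Your reduction to "non-reflexivity of $\mY_{p'}$" is circular as stated: under the hypotheses where $\mX_p=\mY_{p'}^*$ one has $\mX_p^*=\mY_{p'}^{**}$, so non-reflexivity of $\mY_{p'}$ \emph{is} the assertion to be proved, and you offer no independent witness. Worse, the hinted "nested-cube-chain" does not work here: if $Q_1\supsetneqq Q_2\supsetneqq\cdots$ and $\Lambda_jf=\scl{f_{Q_j}}{g_{Q_j}}$, uniform boundedness on $\mX_p$ forces $\|g_{Q_j}\|\lesssim|Q_j|^{1/p}$, and then any $f$ with $\liminf_j\Lambda_jf>0$ must satisfy $\|f_{Q_j}\|\gtrsim|Q_j|^{-1/p}$ along a subsequence, which makes $\int(N_\mX f)^p\gtrsim\sum_k|Q_{j_k}|^{-1}\bigl(|Q_{j_k}|-|Q_{j_{k+1}}|\bigr)=\infty$; so the Banach limit vanishes identically on $\mX_p$. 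The construction that actually works (and is the paper's) spreads the functional over an entire dyadic generation of a fixed unit cube $Q_0$: set $\Lambda_jf:=\sum_{R\subset Q_0,\,\ell(R)=2^{-j}}\scl{f_R}{g_R}$ with $\|g_R\|_{\mY_R}=|R|$. Then $|\Lambda_jf|\lesssim\sum_R\|f_R\|\,|R|\le\int_{Q_0}N_\mX f\le\|N_\mX f\|_p$ uniformly in $j$, each $\Lambda_j$ eventually annihilates every finitely supported $f$, yet $\Lambda_jf\approx1$ for all $j$ when $\|f_R\|=1$ for all $R\subset Q_0$ (an element of $\mX_p$ with $N_\mX f=1_{Q_0}$). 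A Banach limit then yields $\Lambda\in\mX_p^*\setminus\{0\}$ vanishing on singletons, hence not represented by any $g\in\mY_{p'}$. Without this (or an equivalent) construction, the case $1\le p<\infty$ of the first strict inclusion — including $p=1$ and the general setting where $\mX_Q\ne\mY_Q^*$ — is not proved.
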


\begin{proof}
(i)
We first prove $\mX_1\subsetneqq \mY_\infty^*$.
Let $Q_1\supsetneqq Q_2 \supsetneqq Q_3\supsetneqq\ldots$ be dyadic cubes.
Define the functionals 
$\Lambda_j g:= \scl{f_{Q_j}}{g_{Q_j}}$ on $\mY_\infty$, where we have chosen $f_{Q_j}\in\mX_{Q_j}$
such that $\|f_{Q_j}\|= 1/|Q_j|$.
It is clear that $\|\Lambda_j\|_{\mY_\infty^*}\approx 1$.
Consider the sequence space $\ell_\infty(\Z_+)$ and use Hahn--Banach's theorem to construct
$\lim\in(\ell_\infty(\Z_+))^*$ such that 
$$
  \lim((x_n)_{n=1}^\infty)= \lim_{n\to \infty} x_n
$$
for all convergent sequences $(x_n)_{n=1}^\infty$.
Set $\Lambda g:= \lim((\Lambda_j g)_{j=1}^\infty)$.
It is straightforward to verify that $\Lambda\in \mY_\infty^*\setminus \mX_1$.

(ii)
We next prove $\mY_{p'}\subsetneqq \mX_p^*$ for $1\le p<\infty$.
Fix some cube $Q_0\in\mD$ with $\ell(Q)=1$.
Define functionals 
$$
  \Lambda_j f:= \sum_{R: R\subset Q_0, \ell(R)= 2^{-j}} \scl{f_R}{g_R}
$$
on $\mX_p$,
where $g_R\in \mY_R$ is chosen such that $\|g_R\|=|R|$.
Then
$$
  |\Lambda_j f|\lesssim  \sum_{R: R\subset Q_0, \ell(R)= 2^{-j}} \|f_R\| |R|
  \le \int_{Q_0} N_\mX f\le \|N_\mX f\|_p.
$$
Define $\Lambda f:= \lim((\Lambda_j f)_{j=1}^\infty)$.
It is straightforward to verify that $\Lambda\in \mX_p^*\setminus \mY_{p'}$.

(iii)
Finally we assume that $\mX_Q=\mY_Q^*$ and $1<p<\infty$, and aim to show that $\mX_p=\mY_{p'}^*$.
Let $\Lambda\in \mY_{p'}^*$, and let $Q\in\mD$.
Pick $f_Q\in \mX_Q= \mY^*_Q$ such that $\scl{f_Q}{g_Q}= \Lambda((g_Q\delta_{QR})_{R\in\mD})$
for all $g_Q\in \mY_Q$, where $\delta_{QR}=1$ if $R=Q$ and $0$ otherwise.
Let $f:= (f_Q)_{Q\in\mD}$.
Then 
\begin{equation}   \label{eq:representation}
   \Lambda g= \sum_{Q\in\mD} \scl{f_Q}{g_Q}
\end{equation}
holds whenever $g_Q\ne 0$ only for finitely many $Q$.
From the monotone convergence theorem is follows that $\|N_\mX f\|_p\lesssim \|\Lambda\|_{\mY^*_{p'}}$,
so that $f\in\mX_p$.
We now use Lemma~\ref{lem:dense} below to deduce that \eqref{eq:representation} holds for all $g\in \mY_{p'}$ by continuity.
\end{proof}

\begin{lem}   \label{lem:dense}
Assume that $1<p'<\infty$. Then the subspace of finitely non-zero sequences $g=(g_Q)_{Q\in\mD}$
is dense in $\mY_{p'}$.
\end{lem}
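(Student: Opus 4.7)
My plan is to approximate any $g \in \mY_{p'}$ in norm by truncations to finite subsets of $\mD$. Fix an increasing exhausting sequence $\mA_1 \subset \mA_2 \subset \cdots \subset \mD$ of finite sets with $\bigcup_N \mA_N = \mD$ (for instance $\mA_N := \{Q \in \mD : 2^{-N} \le \ell(Q) \le 2^N,\ Q \subset [-2^N,2^N]^n\}$), and set $g^{(N)} := (g_Q\,\mathbf{1}_{\{Q \in \mA_N\}})_{Q \in \mD}$. Each $g^{(N)}$ is finitely non-zero, so it suffices to show that $\|g - g^{(N)}\|_{\mY_{p'}} \to 0$ as $N \to \infty$.

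The key observation is that the chain of inequalities in part (ii) of the proof of Theorem~\ref{thm:dyadic} (sub-case $1 < p' < \infty$) uses no special feature of the sequence to which it is applied; the same argument yields, for \emph{every} sequence $h = (h_R)_{R \in \mD}$,
\[
   \|C_\mY h\|_{L_{p'}(\R^n)}^{p'} \;\le\; C \sum_{R \in \mD} \bigl(\widehat f_R^{\,h}\bigr)^{p'-1}\,\|h_R\|_{\mY_R},
   \qquad \widehat f_R^{\,h} := \frac{1}{|R|}\int_R C_\mY h,
\]
with $C = C(p')$. I apply this to $h := g - g^{(N)}$. Since $h_R = g_R$ when $R \notin \mA_N$ and $h_R = 0$ otherwise, and since the obvious pointwise bound $C_\mY h \le C_\mY g$ gives $\widehat f_R^{\,h} \le \widehat f_R^{\,g}$ (hence, because $p'-1 > 0$, also $(\widehat f_R^{\,h})^{p'-1} \le (\widehat f_R^{\,g})^{p'-1}$), this yields
\[
   \|g - g^{(N)}\|_{\mY_{p'}}^{p'} \;\le\; C \sum_{R \in \mD \setminus \mA_N} \bigl(\widehat f_R^{\,g}\bigr)^{p'-1}\,\|g_R\|_{\mY_R}.
\]

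It therefore remains to show that the whole series $S := \sum_{R \in \mD}(\widehat f_R^{\,g})^{p'-1}\,\|g_R\|$ is finite, since its tail over $R \notin \mA_N$ then vanishes as $\mA_N \uparrow \mD$. This finiteness is also read off from the same proof of Theorem~\ref{thm:dyadic}(ii): with the test sequence $f \in \mX$ satisfying $\|f_R\|_{\mX_R} = (\widehat f_R^{\,g})^{p'-1}$ and $\scl{f_R}{g_R} \approx \|f_R\|\|g_R\|$ that is constructed there, the dyadic maximal inequality (requiring $1 < p' < \infty$) and the identity $p(p'-1) = p'$ give $\|N_\mX f\|_{L_p} \lesssim \|C_\mY g\|_{L_{p'}}^{p'/p}$; combining this with the basic Carleson estimate from part (i) of Theorem~\ref{thm:dyadic} yields
\[
   S \;\approx\; \sum_R \scl{f_R}{g_R} \;\lesssim\; \|N_\mX f\|_{L_p}\,\|C_\mY g\|_{L_{p'}} \;\lesssim\; \|g\|_{\mY_{p'}}^{p'} < \infty.
\]
The only real obstacle, and it is modest, is the bookkeeping needed to verify that this quantitative two-sided control of the intermediate quantity $S$ is genuinely available from the proof of Theorem~\ref{thm:dyadic}(ii) as stated, rather than being entangled with the norm equivalence it was used to establish. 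A direct reading of the displayed inequalities in that proof confirms this, and the lemma then follows from the elementary fact that the tail of an absolutely convergent series tends to zero.
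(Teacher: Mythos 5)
Your proof is correct, but it takes a genuinely different route from the paper's. The paper proves density constructively: it splits off $g^1$, the part of $g$ carried by cubes not contained in a large cube $Q_0$ (controlled by the smallness of $\int_{\R^n\setminus Q_0}|C_\mY g|^{p'}$ together with a sibling-cube observation), and then $g^2$, the part carried by cubes of small sidelength inside $Q_0$ (controlled via the a.e.\ vanishing of the truncated Carleson functional $C_jg$, which is the content of the auxiliary Lemma~\ref{lem:zerotrace}, plus dominated convergence and a direct tail estimate). You instead extract from the proof of Theorem~\ref{thm:dyadic}(ii) the distributional inequality
\[
   \|C_\mY h\|_{L_{p'}}^{p'}\lesssim \sum_{R\in\mD}\Big(\tfrac{1}{|R|}\int_R C_\mY h\Big)^{p'-1}\|h_R\|_{\mY_R},
\]
apply it to $h=g-g^{(N)}$, dominate the right-hand side by the tail of the fixed series $S=\sum_R(\widehat f_R^{\,g})^{p'-1}\|g_R\|$, and verify $S\lesssim\|g\|_{\mY_{p'}}^{p'}<\infty$ from the remaining estimates of that same proof (the $L_{p'}$-boundedness of $M_\mD$ and part (i)); density then follows from tail convergence of an absolutely convergent series. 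This is shorter, avoids Lemma~\ref{lem:zerotrace} entirely, and makes clear that density is a soft consequence of the quantitative duality estimates; the cost is that it quotes intermediate inequalities from inside the proof of Theorem~\ref{thm:dyadic} rather than its statement, and one should note in passing that the maximal-cube selection there is legitimate for $h$ because $C_\mY h\le C_\mY g\in L_{p'}$ with $p'<\infty$. There is no circularity, since Theorem~\ref{thm:dyadic} does not use Lemma~\ref{lem:dense}. Both arguments use the hypothesis $1<p'<\infty$ essentially: you through $p'-1>0$ and the maximal inequality, the paper through the dominated-convergence and tail-integral steps.
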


\begin{proof}
(i)
  Let $g\in \mY_{p'}$ and let $\epsilon>0$.
  Let $Q_1,\ldots, Q_{2^n}$ be the dyadic cubes with one corner at the origin and sidelength $2^M$.
  Choose $M$ large enough so that 
  $\int_{\R_n\setminus Q_0} |C_\mY g|^{p'}\le \epsilon^{p'}$,
  where $Q_0:= Q_1\cup\ldots\cup Q_{2^n}$.
  Set 
$$
  g^1_Q:=
  \begin{cases}
    g_Q, & \qquad Q\not\subset Q_0,\\
    0, & \qquad Q\subset Q_0.
  \end{cases}
$$
Let $Q_j'$ be a sibling to $Q_j$, $1\le j\le 2^n$.
Since $g^1_Q=0$ for $Q\subset Q_j$, it is clear that
$\sup_{Q_j} C_\mY g\le \inf_{Q_j'} C_\mY g$.
Therefore
\begin{multline*}
  \|C_\mY g^1\|_{p'}^{p'}= \int_{\R^n\setminus Q_0} |C_\mY g^1|^{p'}+ \sum_{j=1}^{2^n} \int_{Q_j} |C_\mY g^1|^{p'} \\
  \le  \int_{\R^n\setminus Q_0} |C_\mY g|^{p'}+ \sum_{j=1}^{2^n} \int_{Q_j'} |C_\mY g|^{p'}
  \lesssim \epsilon^{p'},
\end{multline*}
since $C_\mY g^1\le C_\mY g$.

(ii)
Next we consider small cubes inside $Q_0$.
Define
$$
  C_j h(x) := \sup_{Q: x\in Q,\ell(Q)\le 2^{-j}} \frac 1{|Q|}\sum_{R\subset Q} \|h_R\|,\qquad h\in \mY_{p'}.
$$
Then $C_j g(x)\to 0$ as $j\to \infty$ for almost all $x$, by Lemma~\ref{lem:zerotrace} below.
Since $C_j g\le C_\mY g\in L_{p'}(\R^n)$, it follows by dominated convergence that 
we can choose $j<\infty$ such that $\|C_j g\|_{p'}\le \epsilon$.
Next choose $\delta>0$ such that 
$\sum_{R: R\subset Q_0, \ell(R)\le \delta} \|g_R\|\le \epsilon 2^{-nj}|Q_0|^{-1/p'}$.
  Set 
$$
  g^2_Q:=
  \begin{cases}
    g_Q, & \qquad Q\subset Q_0,\ell(Q)\le \delta,\\
    0, & \qquad \text{otherwise}.
  \end{cases}
$$
We have
\begin{multline*}
  C_\mY g^2(x)= \max\left( C_j g^2(x), \sup_{Q: x\in Q, \ell(Q)>2^{-j}}\frac 1{|Q|}\sum_{R\subset Q}\|g^2_R\|\right) \\
  \le \max\Big( C_j g^2(x),  \min(2^{nj}, d(x, Q_0)^{-n})\,  \epsilon 2^{-nj} |Q_0|^{-1/p'} \Big) \\
  \le \max\Big( C_j g^2(x), \epsilon |Q_0|^{-1/p'} \min(1, d(x, Q_0)^{-n})  \Big).
\end{multline*}
where $d(x,Q_0):= \inf_{y\in Q_0}|x-y|$.
This shows that 
$\|C_\mY g^2\|_{p'}\lesssim \epsilon$,
since
$$
  \| \min(1, d(x, Q_0)^{-n}) \|_{p'}\lesssim  |Q_0|^{1/p'}.
$$
It follows that $g-g^1-g^2$ is finitely non-zero, with $\|g^1+g^2\|_{\mY_{p'}}\lesssim \epsilon$.
\end{proof}

\begin{lem}     \label{lem:zerotrace}
   Let $Q_0\in\mD$ and assume that $\sum_{Q\subset Q_0} a_Q<\infty$, where $0\le a_Q<\infty$ for $Q\subset Q_0$.
   Then 
 $$
   \frac 1{|Q|}\sum_{R\subset Q} a_R\to 0,\qquad\text{as } Q\ni x, \ell(Q)\to 0,
 $$
 for almost all $x\in Q_0$.
\end{lem}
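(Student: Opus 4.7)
The plan is to reduce the statement to the Lebesgue differentiation theorem applied to a cleverly chosen $L_1$ function on $Q_0$.

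Define
\[
  F(x) := \sum_{R\subset Q_0,\ R\ni x} \frac{a_R}{|R|},\qquad x\in Q_0,
\]
(the sum runs over dyadic $R$). First I would check that $F\in L_1(Q_0)$ by interchanging sum and integral: each $R\subset Q_0$ contributes $(a_R/|R|)\cdot|R|=a_R$, so $\int_{Q_0} F = \sum_{R\subset Q_0} a_R <\infty$ by hypothesis. In particular $F(x)<\infty$ at almost every $x\in Q_0$, and the series defining $F(x)$ converges absolutely there.

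Next, for any dyadic $Q\subset Q_0$, I would compute $\int_Q F$ by splitting the cubes $R\subset Q_0$ according to their relationship with $Q$: by dyadic nestedness, either $R\cap Q=\emptyset$, or $R\subset Q$ (contributing $|R\cap Q|=|R|$), or $R\supsetneq Q$ (contributing $|R\cap Q|=|Q|$). This yields the key identity
\[
  \frac{1}{|Q|}\int_Q F \;=\; \frac{1}{|Q|}\sum_{R\subset Q} a_R \;+\; \sum_{R\supsetneq Q,\ R\subset Q_0} \frac{a_R}{|R|}.
\]

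Now I would let $Q\ni x$ with $\ell(Q)\to 0$. By the Lebesgue differentiation theorem (valid along shrinking dyadic cubes containing $x$ since dyadic cubes have bounded eccentricity), the left-hand side converges to $F(x)$ for almost every $x\in Q_0$. For the second term on the right, observe that at any $x$ where $F(x)<\infty$, the cubes $R\ni x$ with $R\subset Q_0$ form a nested chain summed absolutely, so
\[
  \sum_{R\supsetneq Q,\ R\ni x,\ R\subset Q_0} \frac{a_R}{|R|} \;=\; F(x)-\sum_{R\subseteq Q,\ R\ni x}\frac{a_R}{|R|} \;\longrightarrow\; F(x)
\]
as $\ell(Q)\to 0$, the subtracted tail being a vanishing remainder of a convergent series. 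Subtracting the two limits gives $\frac{1}{|Q|}\sum_{R\subset Q} a_R\to 0$ a.e., as desired.

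The argument is essentially bookkeeping, with the only nontrivial ingredient being Lebesgue differentiation along dyadic cubes; I do not anticipate a real obstacle, but the step that needs the most care is the interchange of the roles ``$R\subset Q$'' and ``$R\supsetneq Q$'' in the integral identity, since $R=Q$ must be placed unambiguously (I group it with $R\subset Q$).
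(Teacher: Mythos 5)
Your proof is correct, and it takes a genuinely different route from the paper's. The paper argues by contradiction with a covering/stopping-time argument: if the set $E$ on which the $\limsup$ exceeds some $\delta>0$ had positive measure, one chooses a generation $j$ so that the cubes with $\ell(Q)\le 2^{-j}$ carry total mass less than $\delta|E|/2$, selects the maximal cubes $Q_k\subset Q_0$ of sidelength at most $2^{-j}$ with $\frac1{|Q_k|}\sum_{R\subset Q_k}a_R>\delta$ (these are disjoint and cover $E$), and double-counts the total mass $A$ to reach the contradiction $A\ge A+\delta|E|/2$. You instead encode the data in the function $F(x)=\sum_{R\subset Q_0,\,R\ni x}a_R/|R|\in L_1(Q_0)$, prove the exact identity $\frac1{|Q|}\int_QF=\frac1{|Q|}\sum_{R\subset Q}a_R+\sum_{R\supsetneq Q}a_R/|R|$ (valid because dyadic cubes are nested or disjoint, and with $R=Q$ correctly grouped with the first sum, matching the paper's convention that $\subset$ includes equality), and let both the left-hand side and the second right-hand term converge to $F(x)$ — the former by Lebesgue differentiation along dyadic cubes, the latter because the subtracted tail of the a.e.\ convergent series defining $F(x)$ vanishes. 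Both arguments are sound. The paper's is entirely self-contained: its only inputs are the convergence of $\sum a_Q$ and the disjointness of maximal dyadic cubes. Yours invokes the dyadic differentiation theorem, whose standard proof (weak $(1,1)$ bound for the dyadic maximal function plus density of continuous functions) hides essentially the same stopping-time combinatorics, so in a sense you have outsourced the core of the paper's argument to a known theorem. What your version buys is a cleaner structural statement: the Carleson average $\frac1{|Q|}\sum_{R\subset Q}a_R$ is exhibited exactly as the difference between the average of $F$ over $Q$ and a partial sum of the series for $F(x)$, which makes the a.e.\ vanishing transparent and additionally identifies the a.e.\ limit of $\sum_{R\supsetneq Q,\,R\subset Q_0}a_R/|R|$ as $F(x)$.
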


\begin{proof}
We argue by contradiction. Assume there exists $\delta>0$ such that
$$
  E:= \bigsett{x\in Q_0}{\limsup_{Q\ni x,\ell(Q)\to 0}  \frac 1{|Q|}\sum_{R\subset Q} a_R>\delta}
$$
has positive measure.
Let 
$A:= \sum_{Q\subset Q_0} a_Q<\infty$.
Choose $j<\infty$ such that $ \sum_{Q\subset Q_0, \ell(Q)> 2^{-j}} a_Q> A-\delta |E|/2$.
Select the maximal cubes $Q_k\subset Q_0$, $k=1,2,\ldots$, such that $\ell(Q_k)\le 2^{-j}$ and  
$\sum_{R\subset Q_k} a_R>\delta|Q_k|$.
We have that $E\subset \bigcup_k Q_k$, where the cubes $Q_k$ are disjoint.
This gives
\begin{multline*}
  A= \sum_{Q\subset Q_0} a_Q= \sum_{Q\subset Q_0,\ell(Q)>2^{-j}} a_Q + \sum_{Q\subset Q_0, \ell(Q)\le 2^{-j}} a_Q \\
  \ge (A- \delta |E|/2)+ \sum_k \delta |Q_k|\ge A+ \delta |E| /2,
\end{multline*}
which is a contradiction. The conclusion follows.
\end{proof}

\section{The non-dyadic results}    \label{sec:nondyadic}

In this section, we derive the corresponding non-dyadic results on the Carleson duality from the dyadic results in Section~\ref{sec:dyadic}.
We use the following notation.
For fixed constants $c_0>1, c_1>0, a>0$, we use Whitney regions
$W(t,x)$, $L_q$ Whitney averages $W_q f$  of functions $f\in L_q^\loc(\R^{1+n}_+)$,
and non-tangential maximal functions $N_* f$, as in the introduction.
Also define the Carleson functionals
$$
  C^r g(z):= \sup_{Q\ni z} \left(  \frac 1{|Q|}\iint_{\widehat Q} |g(t,x)|^r dtdx\right)^{1/r},\qquad z\in\R^n,
$$
for $1\le r<\infty$,
and the Hardy--Littlewood maximal function
$$
  M h(z):= \sup_{Q\ni z} \frac 1{|Q|}\int_Q h(y) dy, \qquad z\in\R^n,
$$
for $h\in L_1^{\text{loc}}(\R^n)$.
Here the suprema are over all (non-dyadic) axis-parallel cubes in $\R^n$ containing $z$.
We write $C^1g= Cg$ when $r=1$.

We aim to prove the following non-dyadic version of Corollary~\ref{cor:dyadicwithr}.

\begin{thm}   \label{thm:nondyadicduality}
Let $1/p+1/\tilde p=1/q+1/\tilde q=1/r$, with $r\le p<\infty$, $r\le q\le \infty$, $1\le r<\infty$.
Then there is a constant $0<C<\infty$ such that 
\begin{gather*}
  \|fg\|_{L_r(\R^{1+n}_+)} \le C \|N_*(W_q f)\|_{L_p(\R^n)} \|C^r(W_{\tilde q} g)\|_{L_{\tilde p}(\R^n)}, \\
 \|N_*(W_q f)\|_{L_p(\R^n)} \le C \sup_{\|C^r(W_{\tilde q} g)\|_{L_{\tilde p}(\R^n)}=1} \|fg\|_{L_r(\R^{1+n}_+)}, \\
 \|C^r(W_{\tilde q} g)\|_{L_{\tilde p}(\R^n)} \le C \sup_{ \|N_*(W_{q} f)\|_{L_p(\R^n)} =1}  \|fg\|_{L_r(\R^{1+n}_+)}.
\end{gather*}
\end{thm}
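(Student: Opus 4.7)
The plan is to deduce Theorem~\ref{thm:nondyadicduality} from its dyadic counterpart, Corollary~\ref{cor:dyadicwithr}, via bilateral comparisons between the non-dyadic functionals $N_*(W_q f)$, $C^r(W_{\tilde q}g)$ and the dyadic ones $N_{L_q}f$, $C^r_{L_{\tilde q}}g$. For the easy direction, I would first establish the pointwise bounds $N_{L_q}f(x)\lesssim N_*(W_q f)(x)$ and $C^r_{L_{\tilde q}}g(z)\lesssim C^r(W_{\tilde q}g)(z)$. The first follows because for $x\in Q\in\mD$ and any $t\in(\ell(Q)/2,\ell(Q))$, the point $(t,x)$ lies in $W_Q$, and for $c_0, c_1$ large enough we have $W_Q\subset W(t,x)$ with $|W(t,x)|\approx|W_Q|$, so $|W_Q|^{-1/q}\|f\|_{L_q(W_Q)}\lesssim W_q f(t,x)\leq N_*(W_q f)(x)$. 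The second follows by integrating the pointwise inequality $|W_R|^{-r/\tilde q}\|g\|_{L_{\tilde q}(W_R)}^r\lesssim(W_{\tilde q}g(t,x))^r$, valid for $(t,x)\in W_R$, over $W_R$ and summing across the dyadic partition $\widehat Q=\bigcup_{R\subset Q,\,R\in\mD}W_R$. Combining these with Corollary~\ref{cor:dyadicwithr} yields the first inequality of the theorem immediately.

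For the remaining two inequalities, the opposite-direction comparisons are needed. I would invoke a finite family of shifted dyadic grids $\{\mD^\alpha\}$ (the one-third trick) such that any axis-parallel cube $Q'\subset\R^n$ is contained in some $Q\in\mD^\alpha$ with $\ell(Q)\leq C\ell(Q')$, and any non-dyadic Whitney region $W(t,x)$ is contained in an enlarged dyadic Whitney region $\widehat W_R$ over some $R\in\mD^\alpha$ with $\ell(R)\approx t$. This gives pointwise majorizations
\begin{equation*}
  N_*(W_q f)(y)\lesssim\max_\alpha\widehat N_{L_q}^{(\alpha)}f(y),\qquad C^r(W_{\tilde q}g)(z)\lesssim\max_\alpha\widehat C^{r,(\alpha)}_{L_{\tilde q}}g(z),
\end{equation*}
where the hatted quantities are the dyadic functionals in grid $\mD^\alpha$ built from the enlarged Whitney averages. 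Since these still fit the framework of Section~\ref{sec:dyadic}, defining admissible Banach-space dualities $\scl{\mX_Q}{\mY_Q}$ with constants uniformly bounded in $Q$ and $\alpha$, Corollary~\ref{cor:dyadicwithr} applies in each shifted grid. Combining its dyadic versions of the second and third inequalities in each grid with the easy-direction bounds from the first paragraph---now applied in the enlarged setting and followed by a Fefferman--Stein equivalent change of cone aperture---transfers the sup back to the original non-dyadic functional and yields the two remaining inequalities.

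The main obstacle will be the careful bookkeeping of enlargement constants $c_0, c_1, a$ throughout: each transfer between dyadic and non-dyadic enlarges the Whitney regions, and the shifted-grid step replaces arbitrary cubes by slightly larger dyadic ones in possibly different grids. The Fefferman--Stein equivalence of the non-tangential maximal function under change of aperture, together with an analogous $L_{\tilde p}$-norm equivalence for $C^r$ under bounded enlargement of cubes, is what ultimately absorbs these losses into $O(1)$ constants, closing the argument.
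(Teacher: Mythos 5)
Your overall strategy---reduce to the dyadic Corollary~\ref{cor:dyadicwithr} by comparing the non-dyadic functionals with dyadic ones---is exactly the paper's, and your ``easy direction'' pointwise bounds are fine (modulo the caveat that $W_Q\subset W(t,x)$ forces $c_0,c_1$ large, so you still owe an argument that the non-dyadic norms are independent of these parameters; in the paper that independence is a \emph{consequence} of the two-sided equivalence, not an input). But the hard direction as you set it up has two genuine gaps. First, the enlarged Whitney regions $\widehat W_R=\bigcup_{(t,x)\in W_R}W(t,x)$ over a (shifted) grid overlap, so they no longer partition $\R^{1+n}_+$; Corollary~\ref{cor:dyadicwithr} identifies the abstract pairing $\sum_Q\scl{f_Q}{g_Q}$ with $\|fg\|_{L_r(\R^{1+n}_+)}^r$ precisely because the $W_Q$ are disjoint. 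With overlapping regions the inequality ``$\|fg\|_{L_r}\lesssim$ pairing'' survives, but the second and third inequalities require \emph{constructing} a dual sequence $(g_Q)$ with $g_Q\in L_{\tilde q}(\widehat W_Q)$ realizing the pairing, and such a sequence does not glue to a single function $g$ on $\R^{1+n}_+$, so you cannot read off $\|fg\|_{L_r}$ from it. Second, the comparisons you defer to ``bookkeeping''---passing from the enlarged/shifted dyadic functionals back to the honest ones, and the ``analogous $L_{\tilde p}$ equivalence for $C^r$ under bounded enlargement''---are not routine: the enlargement that matters sits \emph{inside} the Whitney average $W_{\tilde q}g$, not just in the base cubes, and controlling it is the technical heart of the matter.

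The paper avoids both issues by staying in a single dyadic grid and proving two-sided $L_p$-norm (not pointwise) equivalences, Propositions~\ref{prop:NTequivalentnorms} and \ref{prop:Cequivalentnorms}. The mechanism is Lemma~\ref{lem:estimatetechnique} (the Fefferman--Stein technique): to show $\|F\|_{L_p}\lesssim\|G\|_{L_p}$ it suffices that $F(z)>\lambda$ forces $G>c\lambda$ on a set of measure comparable to its distance from $z$ to the $n$-th power; combined with the pigeonhole Lemma~\ref{lem:bigmean} (a region covered by boundedly many comparable pieces has one piece with a comparable average), this converts ``largeness at a point'' into ``largeness on a nearby cube'' without any pointwise domination and without shifted grids. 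For the Carleson functional the direction $\|C(W_{q'}g)\|_{L_{p'}}\lesssim\|C_{L_{q'}}g\|_{L_{p'}}$ further needs a H\"older step and a bounded-overlap count over the sets $\widetilde W_R=\bigcup_{(t,x)\in W_R}W(t,x)$ to dominate $\iint_{\widehat Q}W_{q'}g$ by finitely many dyadic Carleson averages. If you want to salvage your route, replace the shifted-grid/enlarged-region step by these two lemmas; as written, the proposal's hard direction does not close.
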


For $r=1$, this means that there is a duality
$$
  f,g \mapsto \iint_{\R^{1+n}_+} fg
$$
between the Banach spaces $N_{p,q}$ and $C_{p', q'}$, defined by the norms
$$
  \|f\|_{N_{p,q}}:= \|N_*(W_q f)\|_{L_p(\R^n)}
$$
and 
$$
  \|g\|_{C_{p',q'}}:= \|C^1(W_{q'} g)\|_{L_{p'}(\R^n)},
$$
with $1/p+1/p'=1$, $1/q+1/q'=1$, $1\le p<\infty$ and $1\le q\le \infty$.
We also prove the following non-dyadic version of Theorem~\ref{thm:fulldyadicduality}.

\begin{thm}    \label{thm:nondyadicdualspaces}
  With the above notation, consider the duality $\scl{N_{p,q}}{C_{p', q'}}$. 
  We have, for $1\le q\le \infty$,
$C_{p',q'}\subsetneqq (N_{p,q})^*$ for any $1\le p<\infty$, as well as $N_{1,q}\subsetneqq (C_{\infty,q'})^*$.

If $1<q\le\infty$ and $1<p<\infty$, then $N_{p,q}=(C_{p',q'})^*$.
\end{thm}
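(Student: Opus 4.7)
The plan is to derive Theorem~\ref{thm:nondyadicdualspaces} from its dyadic counterpart Theorem~\ref{thm:fulldyadicduality} via the dyadic/non-dyadic norm equivalences already established in this section in the passage from Corollary~\ref{cor:dyadicwithr} to Theorem~\ref{thm:nondyadicduality}. Fix a dyadic system $\mD$ with its disjoint Whitney regions $\{W_Q\}_{Q\in\mD}$, and for each $Q$ set $\mX_Q:=L_q(W_Q)$, $\mY_Q:=L_{q'}(W_Q)$ with the normalizations of Corollary~\ref{cor:dyadicwithr}. The restriction map $f\mapsto(f|_{W_Q})_Q$ identifies the non-dyadic spaces with the dyadic ones, with $\|f\|_{N_{p,q}}\approx\|(f|_{W_Q})_Q\|_{\mX_p}$ and $\|g\|_{C_{p',q'}}\approx\|(g|_{W_Q})_Q\|_{\mY_{p'}}$, and the disjointness of $\{W_Q\}$ gives $\iint_{\R^{1+n}_+} fg=\sum_Q\scl{f|_{W_Q}}{g|_{W_Q}}$. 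Hence the pairing $\scl{N_{p,q}}{C_{p',q'}}$ is topologically equivalent to the pairing $\scl{\mX_p}{\mY_{p'}}$ from Theorem~\ref{thm:fulldyadicduality}.

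For the two strict inclusions I transport along this identification the functionals constructed in the proof of Theorem~\ref{thm:fulldyadicduality}. The Hahn--Banach/Banach-limit construction there, applied to a shrinking nest of dyadic cubes, produces functionals in $\mY_\infty^*\setminus L(\mX_1)$ and in $\mX_p^*\setminus L(\mY_{p'})$, which under the above identification become elements of $(C_{\infty,q'})^*\setminus N_{1,q}$ and of $(N_{p,q})^*\setminus C_{p',q'}$. The key observation is that any functional represented by an honest element of $C_{p',q'}$ (respectively, of $N_{1,q}$) pulls back to one represented on the dyadic side, so non-representability is preserved under the identification.

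For the equality $N_{p,q}=(C_{p',q'})^*$ when $1<p<\infty$ and $1<q\le\infty$, I take $\Lambda\in(C_{p',q'})^*$; the norm equivalence induces a bounded functional on $\mY_{p'}$ of comparable norm. Since $L_q(W_Q)=L_{q'}(W_Q)^*$ precisely when $1<q\le\infty$, Theorem~\ref{thm:fulldyadicduality} furnishes a sequence $(f_Q)\in\mX_p$ with $\|(f_Q)\|_{\mX_p}\lesssim\|\Lambda\|$ and $\sum_Q\scl{f_Q}{g_Q}=\Lambda g$ on every finitely supported $g\in\mY_{p'}$. Gluing the $f_Q$ into a single $f\in L_q^\loc(\R^{1+n}_+)$ with $f|_{W_Q}=f_Q$ and applying the norm equivalence in reverse gives $f\in N_{p,q}$ with $\|f\|_{N_{p,q}}\lesssim\|\Lambda\|$, and the identity $\Lambda g=\iint fg$ holds for every $g\in C_{p',q'}$ supported in a finite union of Whitney regions. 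This identity then extends to all of $C_{p',q'}$ by continuity and density.

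The main obstacle I anticipate is precisely this final density step, the non-dyadic analogue of Lemma~\ref{lem:dense}. The cleanest route is to transport the dyadic density along the equivalence: approximate the dyadic sequence $(g|_{W_Q})_Q$ by a finitely supported one via Lemma~\ref{lem:dense}, and reinterpret the approximant as a function supported in finitely many $W_Q$. The delicate point is that a non-dyadic Carleson box $\widehat Q$ is not a finite union of dyadic Whitney regions, so bounding the non-dyadic Carleson functional of the truncation error by its dyadic counterpart will require averaging over a small family of shifted dyadic systems, in the same spirit as the comparison arguments underpinning Theorem~\ref{thm:nondyadicduality}.
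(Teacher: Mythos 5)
Your proposal is correct and follows exactly the paper's route: the paper's proof is precisely ``transport Theorem~\ref{thm:fulldyadicduality} along the norm equivalences of Propositions~\ref{prop:NTequivalentnorms} and~\ref{prop:Cequivalentnorms}'', using that the disjoint Whitney regions make the two pairings coincide. Your worry about the final density step is unfounded: since Proposition~\ref{prop:Cequivalentnorms} gives $\|h\|_{C_{p',q'}}\approx\|(h_Q)_Q\|_{\mY_{p'}}$ for \emph{every} $h$, applying it to the truncation error $g-g_{\mathrm{fin}}$ transfers Lemma~\ref{lem:dense} directly, with no need for shifted dyadic systems.
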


\begin{rem}[Relation to the Coifman--Meyer--Stein tent spaces]\label{rem:tentspaces}
It is immediate that for $\tilde{q}=r$, we have the pointwise equivalence $C^r(W_{\tilde{q}} g)=C^r(W_r g)\approx C^r g$. For $r=2$, this is the functional denoted simply by $C$ by Coifman, Meyer and Stein \cite{CMS}. They show \cite[Thm 3]{CMS} that there is further the $L^p$ equivalence
$$
  \|C^2(g)\|_{L_p(\R^n)}\approx\|A^2(g)\|_{L_p(\R^n)}=:\|g\|_{T_{p,2}},\qquad p\in(2,\infty),
$$
where
$$
  A^2(g):=\left(\iint_{|y-x|<t}|g(t,y)|^2\frac{dy\,dt}{t^n}\right)^{1/2}
$$
is the area integral and $T_{p,2}$ is the tent space. Observe also that $N_*(W_\infty g)$ is pointwise dominated by the non-tangential maximal function of $g$ with a different aperture, and hence
$$
  \|N_*(W_\infty g)\|_{L_p(\R^n)}\approx\|N_* g\|_{L_p(\R^n)}.
$$

In view of the previous observations, taking $\tilde{q}=r=2$ (and then $q=\infty$) in Theorem \ref{thm:nondyadicduality}, it gives the following characterization of pointwise multipliers from the tent space $T_{\tilde{p},2}$ to $L_2(\R^{1+n})$, where $1/p+1/\tilde{p}=1/2$ and $\tilde p>2$:
\begin{gather*}
  \|fg\|_{L_2(\R^{1+n}_+)} \le C \|N_*f\|_{L_p(\R^n)} \|g\|_{T_{\tilde p,2}}, \\
 \|N_* f\|_{L_p(\R^n)} \le C \sup_{\|g\|_{T_{\tilde p,2}}=1} \|fg\|_{L_2(\R^{1+n}_+)}, \\
 \|g\|_{T_{\tilde p,2}} \le C \sup_{ \|N_* f\|_{L_p(\R^n)} =1}  \|fg\|_{L_2(\R^{1+n}_+)}.
\end{gather*}

On the other hand, Theorem \ref{thm:nondyadicduality} does not contain the known duality results for these tent space, since duality in Theorem \ref{thm:nondyadicduality} corresponds to  $r=1$, and for this exponent the spaces appearing in the statement are outside the scale of classical tent spaces as introduced by Coifman, Meyer and Stein.
\end{rem}

We prove Theorems~\ref{thm:nondyadicduality} and \ref{thm:nondyadicdualspaces} by showing equivalence of the corresponding dyadic and non-dyadic norms. 
For this, we require the following two lemmata.

\begin{lem}   \label{lem:bigmean}
  Let $0\le u\in L_1^\loc(\R^{1+n}_+)$.
  Assume that $W\subset\bigcup_{j=1}^N W_j\subset \R^{1+n}_+$, where $|W_j|\le C|W|$ for $j=1,\ldots, N$.  
  Then for some $1\le j\le N$, we have
$$
  \frac 1{|W_j|}\iint_{W_j} u \ge \frac 1 {CN} \left( \frac 1{|W|}\iint_{W} u \right).
$$
\end{lem}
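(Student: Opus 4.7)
The plan is straightforward and relies only on the pigeonhole principle together with the hypothesis $|W_j|\le C|W|$. The key observation is that since $u\ge 0$ and $W\subset \bigcup_{j=1}^N W_j$, we have the subadditivity bound
$$
\iint_W u \;\le\; \sum_{j=1}^N \iint_{W_j} u.
$$
By pigeonhole, at least one index $j\in\{1,\dots,N\}$ must satisfy
$$
\iint_{W_j} u \;\ge\; \frac{1}{N}\iint_W u.
$$

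For this particular $j$, I would then divide by $|W_j|$ and use the size hypothesis $|W_j|\le C|W|$ in the denominator to pass from $1/|W_j|$ to $1/(C|W|)$:
$$
\frac{1}{|W_j|}\iint_{W_j} u \;\ge\; \frac{1}{N|W_j|}\iint_W u \;\ge\; \frac{1}{CN|W|}\iint_W u,
$$
which is exactly the desired inequality.

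There is essentially no obstacle here — the lemma is a clean two-line pigeonhole argument, and the role of the bound $|W_j|\le C|W|$ is only to convert the normalization $1/|W_j|$ into the target normalization $1/|W|$ up to the factor $C$. The nonnegativity of $u$ is what allows the subadditive cover estimate (without absolute values or cancellation issues), and no further regularity of $u$ or geometric properties of the sets $W,W_j$ are needed beyond what is stated.
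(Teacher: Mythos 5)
Your proof is correct and is essentially identical to the paper's own argument: both use the subadditive cover bound $\iint_W u\le\sum_j\iint_{W_j}u\le N\max_j\iint_{W_j}u$ to pick a good index $j$, and then divide by $|W_j|\le C|W|$. Nothing further is needed.
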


\begin{proof}
The conclusion follows directly from
$\iint_W u\le \sum_{j=1}^N \iint_{W_j} u\le N\max_{j}\iint_{W_j}u$.
\end{proof}

The following lemma uses the estimation technique from \cite[Lem. 1]{FS}.

\begin{lem}    \label{lem:estimatetechnique}
  Consider two functions $f,g:\R^n\to|0,\infty)$.
  Assume that there are constants $0<c_1,c_2<\infty$ such that
  $f(z)>\lambda$ implies $g>c_1\lambda$ on some set $B\subset \R^n$
  with $0<\sup\sett{|y-z|}{y\in B}^n\le c_2 |B|$. Then there is a constant $0<c_3<\infty$ such that
$$
   \|f\|_{L_p(\R^n)}\le c_3 \|g\|_{L_p(\R^n)},
$$
for any $1\le p\le \infty$.
\end{lem}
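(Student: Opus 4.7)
My plan is to use the classical Fefferman--Stein trick that the statement already hints at: translate the hypothesis into a pointwise lower bound on the Hardy--Littlewood maximal function of an indicator set, and then apply the weak-type $(1,1)$ inequality followed by layer cake.

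Fix $\lambda>0$ and set $E_\lambda:=\{y\in\R^n:g(y)>c_1\lambda\}$. Whenever $f(z)>\lambda$, the hypothesis furnishes a set $B\subset E_\lambda$ for which, writing $r:=\sup\{|y-z|:y\in B\}$, one has $B\subset B(z,r)$, $r>0$ and $r^n\le c_2|B|$. Since $|B(z,r)|=\omega_n r^n$ where $\omega_n$ is the volume of the unit ball, this yields $|E_\lambda\cap B(z,r)|/|B(z,r)|\ge(\omega_n c_2)^{-1}$, and therefore $M\mathbf 1_{E_\lambda}(z)\ge(\omega_n c_2)^{-1}$. In particular $\{f>\lambda\}\subset\{M\mathbf 1_{E_\lambda}\ge(\omega_n c_2)^{-1}\}$, so the weak-type $(1,1)$ bound for $M$ yields the distributional estimate $|\{f>\lambda\}|\lesssim|\{g>c_1\lambda\}|$, with an implicit constant depending only on $n$ and $c_2$.

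For $1\le p<\infty$ I would then apply the layer-cake formula: $\|f\|_p^p=p\int_0^\infty\lambda^{p-1}|\{f>\lambda\}|\,d\lambda\lesssim c_1^{-p}\|g\|_p^p$, which handles that range. For $p=\infty$ the argument is immediate from the hypothesis itself, without invoking any maximal function: for every $\lambda<\|f\|_\infty$ there exists some $z$ with $f(z)>\lambda$, which forces $\{g>c_1\lambda\}$ to have positive measure and hence $\|g\|_\infty\ge c_1\lambda$; letting $\lambda\uparrow\|f\|_\infty$ gives $\|f\|_\infty\le c_1^{-1}\|g\|_\infty$.

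No serious obstacle is anticipated. The only mildly delicate point is ensuring that $c_3$ can be chosen uniformly across $p\in[1,\infty]$, which is why I treat the case $p=\infty$ separately and match it against the layer-cake bound coming from the weak-type argument.
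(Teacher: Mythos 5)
Your proposal is correct and follows essentially the same route as the paper: both turn the hypothesis into the pointwise bound $M\mathbf{1}_{E_\lambda}(z)\gtrsim c_2^{-1}$ on $\{f>\lambda\}$, apply the weak-type $(1,1)$ inequality to get $|\{f>\lambda\}|\lesssim|\{g>c_1\lambda\}|$, and finish with the layer-cake formula. The only (immaterial) difference is that the paper deduces the $p=\infty$ case from the same distributional inequality, whereas you handle it by a direct elementary argument.
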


\begin{proof}
  Let $\lambda>0$.
  Let $E_\lambda:= \sett{y}{g(y)>c_1\lambda}$ and consider the indicator function $1_{E_\lambda}$.
  Let $z\in\R^n$ be such that $f(z)>\lambda$. Then, by hypothesis, there exists a set $B\subset E_\lambda$ and the hypothesis implies that
$$
   M(1_{E_\lambda})(z)\gtrsim |B|/\sup\sett{|y-z|}{y\in B}^n\ge c_2^{-1}>0.
$$
  By the weak $L_1$ boundedness of $M$, we have
$$
  |\sett{z}{f(z)>\lambda}|\le |\sett{z}{M(1_{E_\lambda})(z)\gtrsim 1}|\lesssim \|1_{E_\lambda}\|_1= |E_\lambda|.
$$
This proves the estimate for $p=\infty$. For $1\le p<\infty$, we estimate
\begin{multline*}
  \int_{\R^n} |f(x)|^p dx= \int_0^\infty  |\sett{z}{f(z)>\lambda}| p\lambda^{p-1}d\lambda \\
  \lesssim  \int_0^\infty  |\sett{z}{g(z)>c_1\lambda}| p\lambda^{p-1}d\lambda\approx \int_{\R^n} |g(x)|^p dx.
\end{multline*}
\end{proof}

In order to compare the Banach spaces $N_{p.q}$ and $C_{p', q'}$ with their dyadic counterparts,
we make the following definitions.
  With notation as in Section~\ref{sec:dyadic},
  denote by $N^\mD_{p,q}$ the space $\mX_p$ with $\mX_Q= L_q(W_Q)$, so that
$$
\|f\|_{N^\mD_{p,q}}= \|N_{L_q}(f)\|_{L_p(\R^n)}.
$$
  Similarly denote by $C^\mD_{p',q'}$ the space $\mY_{p'}$ with $\mY_Q= L_{q'}(W_Q)$, so that
$$
\|g\|_{C^\mD_{p',q'}}= \|C_{L_{q'}}(g)\|_{L_{p'}(\R^n)}.
$$
  In what follows, we shall identify functions $f\in L_1^\loc(\R^{1+n}_+)$
  and sequences $(f_Q)_{Q\in\mD}$ where $f_Q\in L_1(W_Q)$ in the natural way,
  i.e. given $f$ we set $f_Q:= f|_{W_Q}$ and given $(f_Q)_{Q\in\mD}$ we set
  $f:= f_Q$ on $W_Q$.
  
\begin{prop}   \label{prop:NTequivalentnorms}
  Let $1\le p<\infty$ and $1\le q\le \infty$.
  Under the above identification, the spaces $N_{p,q}$ and $N^\mD_{p,q}$ are equal,
  with equivalent norms
$$
   \|N_*(W_q f)\|_{L_p(\R^n)} \approx \|N_{L_q}(f)\|_{L_p(\R^n)}.
$$
In particular, up to equivalence of norms, the left hand side is independent of the exact choice of $a\ge 0, c_0>1, c_1>0$, and the right hand side is independent of the exact choice of dyadic system.
\end{prop}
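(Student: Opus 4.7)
The plan is to prove each of the two inequalities separately, and in both directions the argument is essentially the same: use Lemma~\ref{lem:bigmean} to compare an $L_q$ average over a Whitney region of one kind with an $L_q$ average over a Whitney region of the other kind, and then apply Lemma~\ref{lem:estimatetechnique} to pass from the resulting pointwise-type comparison to the $L_p$ norm inequality. The ``In particular'' statement then follows automatically, since once both estimates are established between a non-dyadic norm with some given $(a,c_0,c_1)$ and a dyadic norm with some given dyadic system, any two non-dyadic (respectively dyadic) norms of this type are sandwiched via a dyadic (respectively non-dyadic) intermediary and are therefore equivalent.

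For the inequality $\|N_{L_q}(f)\|_{L_p(\R^n)}\lesssim \|N_*(W_q f)\|_{L_p(\R^n)}$, I would start with a dyadic cube $Q\in\mD$ and observe that $W_Q=(\ell(Q)/2,\ell(Q))\times Q$ can be covered by a bounded number $N$ of non-dyadic Whitney regions $W(t_j,x_j)$ with $t_j\approx \ell(Q)$, $x_j$ close to $Q$, and $|W(t_j,x_j)|\le C|W_Q|$. Lemma~\ref{lem:bigmean} applied to $u=|f|^q$ then produces an index $j$ with
$$
(W_q f)(t_j,x_j)\gtrsim |W_Q|^{-1/q}\|f\|_{L_q(W_Q)}.
$$
Now suppose $N_{L_q}(f)(z)>\lambda$: then there is $Q\ni z$ witnessing this, and the previous inequality gives some $(t_j,x_j)$ with $(W_q f)(t_j,x_j)\gtrsim \lambda$. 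The set
$$
B:=\{y\in\R^n: |y-x_j|\le at_j/2\}
$$
lies inside the cone of $(t_j,x_j)$, so $N_*(W_q f)>c_1\lambda$ on $B$, and since $t_j\approx \ell(Q)$ and $|x_j-z|\lesssim \ell(Q)$, we get $\sup_{y\in B}|y-z|^n\lesssim t_j^n\approx |B|$. Lemma~\ref{lem:estimatetechnique} applied with $f=N_{L_q}(f)$ and $g=N_*(W_q f)$ then yields the inequality.

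For the reverse inequality $\|N_*(W_q f)\|_{L_p(\R^n)}\lesssim \|N_{L_q}(f)\|_{L_p(\R^n)}$, the geometric comparison is reversed: a non-dyadic Whitney region $W(t,x)$ can be covered by a bounded number of dyadic Whitney regions $W_{Q_j}$ with $\ell(Q_j)\approx t$, $Q_j$ near $x$, and $|W_{Q_j}|\le C|W(t,x)|$. Here one uses that among dyadic cubes whose sidelength matches $t$ up to a factor of $2$, only finitely many meet the horizontal slab of $W(t,x)$. Lemma~\ref{lem:bigmean} then gives some $Q_j$ with
$$
|W_{Q_j}|^{-1/q}\|f\|_{L_q(W_{Q_j})}\gtrsim (W_q f)(t,x).
$$
If $N_*(W_q f)(y)>\lambda$ via $(t,x)$ with $|x-y|\le at$, we take the corresponding $Q_j$ and set $B:=Q_j$: then $N_{L_q}(f)>c_1\lambda$ on $B$, and $|z-y|\le |z-x|+|x-y|\lesssim \ell(Q_j)+at\approx t$ for $z\in Q_j$, so $\sup_{z\in B}|z-y|^n\lesssim |B|$. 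A second application of Lemma~\ref{lem:estimatetechnique} finishes this direction.

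The main obstacle is purely bookkeeping rather than analytic: one needs to verify that in each direction the geometric covering has a number of pieces $N$ and a size-ratio constant $C$ that are uniform in $Q$ (respectively in $(t,x)$), depending only on $n$, $a$, $c_0$, $c_1$ and the dyadic system. Once these uniform bounds are in place, Lemmas~\ref{lem:bigmean} and~\ref{lem:estimatetechnique} do all the work.
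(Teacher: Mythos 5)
Your overall architecture --- a two-sided covering comparison between dyadic and non-dyadic Whitney regions, fed through Lemma~\ref{lem:bigmean} and then Lemma~\ref{lem:estimatetechnique} --- is exactly the paper's, and your treatment of the direction $\|N_*(W_q f)\|_{L_p}\lesssim\|N_{L_q}(f)\|_{L_p}$ coincides with the paper's argument. The deduction of the ``in particular'' clause by sandwiching through an intermediary of the other type is also the intended one.

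There is, however, a genuine gap in your first direction, precisely at the point where the proposition claims independence of the aperture down to $a=0$ (the paper devotes a remark after the statement to the fact that the vertical maximal function suffices). Having covered $W_Q$ by Whitney regions $W(t_j,x_j)$, Lemma~\ref{lem:bigmean} only tells you that $W_qf$ is large at the finitely many points $(t_j,x_j)$ themselves; to manufacture a set $B$ of positive measure you then invoke the cone of aperture $a$, taking $B=\{y:|y-x_j|\le at_j/2\}$. This set is a single point when $a=0$, and in general has measure $\approx(at_j)^n$, so the constant you feed into Lemma~\ref{lem:estimatetechnique} degenerates as $a\to0$ and the argument gives nothing for the vertical maximal function. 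The paper avoids this by covering $W_Q$ not by Whitney regions but by boundedly many small cubes $W$ with $\diam W=c_2\,\dist(W,\R^n)$, where $c_2$ is chosen so small that $W\subset W(s,y)$ for \emph{every} $(s,y)\in W$; Lemma~\ref{lem:bigmean} then yields one such cube $W_0$ on which the $L_q$ average of $f$ is $\gtrsim\lambda$, whence $W_qf\gtrsim\lambda$ at every point of $W_0$, and one takes $B$ to be the projection of $W_0$ onto $\R^n$, which has measure $\approx\ell(Q)^n$ independently of $a$. With that replacement your proof is complete (modulo the routine remark that for $q=\infty$ the appeal to Lemma~\ref{lem:bigmean} with $u=|f|^q$ should be replaced by the trivial fact that an $L_\infty$ norm over a union is the maximum over the pieces --- a point the paper also glosses over).
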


Note that this shows that we here in fact can choose $a=0$, i.e. the vertical maximal function,
for $N_* (W_q f)$. This is because we already have some non-tangential control in $W_q f$.

\begin{proof}
(i)
 To prove the estimate $\|N_*(W_q f)\|_{L_p(\R^n)} \gtrsim \|N_{L_q}(f)\|_{L_p(\R^n)}$,
we use Lemma \ref{lem:estimatetechnique}. 
Assume $N_{L_q} f(z)>\lambda$.
Then there is a cube $Q\in\mD$ such that $z\in Q$ and 
$$
{|W_Q|}^{-1/q} \|f\|_{L_q(W_Q)}\ge \lambda.
$$
Consider (non-dyadic) cubes $W\subset \R^{1+n}_+$ with $\diam(W)= c_2\,  \dist(W, \R^n)$.
We fix $c_2>0$ small enough, depending on $c_0,c_1$, so that
$$
  W \subset \bigcap_{(s,y)\in W} W(s,y).
$$
 It is clear that there is an integer $N<\infty$ such that $W_Q$ is the union of at most $N$ such cubes $W$, uniformly for all $Q$.
 Lemma~\ref{lem:bigmean} shows that one of these cubes $W$, say $W_0$,
has ${|W_0|}^{-1/q} \|f\|_{L_q(W_0)}\gtrsim \lambda$.
It follows that
${|W(t,x)|}^{-1/q} \|f\|_{L_q(W(t,x))}\gtrsim \lambda$ for $(t,x)\in W_0$, and therefore 
$N_*(W_q f)\gtrsim \lambda$ on the projection $B\subset \R^n$ of $W\subset \R^{1+n}_+$,
and the stated estimate follows from Lemma~\ref{lem:estimatetechnique}.

(ii)
 Conversely, to prove the estimate $\|N_*(W_q f)\|_{L_p(\R^n)} \lesssim \|N_{L_q}(f)\|_{L_p(\R^n)}$,
 we again apply Lemma~\ref{lem:estimatetechnique}.
 Assume $N_*(W_q f)(z)>\lambda$.
 Then ${|W(t,x)|}^{-1/q} \|f\|_{L_q(W(t,x))}\ge \lambda$ for some $(t,x)$ such that $|x-z|\le at$.
 We see that that there is an integer $N<\infty$ such that $W(t,x)$ is contained in the union of at most $N$
  dyadic Whitney regions $W_Q$, with $N$ independent of $(t,x)$.
Thus by Lemma~\ref{lem:bigmean}, for some constant $c>0$,
${|W_Q|}^{-1/q} \|f\|_{L_q(W_Q)}\ge c \lambda$ for one of these $Q$.
Since $N_{L_q}(f) >c\lambda$ on $Q$ and $\dist(z,Q)\lesssim t\approx \ell(Q)$, Lemma~\ref{lem:estimatetechnique} completes the proof.
\end{proof}

\begin{prop}    \label{prop:Cequivalentnorms}
  Let $1\le p<\infty$ and $1\le q\le \infty$.
  Under the above identification, the spaces $C_{p',q'}$ and $C^\mD_{p',q'}$ are equal,
  with equivalent norms
$$
   \|C(W_{q'} g)\|_{L_{p'}(\R^n)} \approx \|C_{L_{q'}}g\|_{L_{p'}(\R^n)}.
$$
In particular, up to equivalence of norms, the left hand side is independent of the exact choice of $c_0>1, c_1>0$, and the right hand side is independent of the exact choice of dyadic system.
\end{prop}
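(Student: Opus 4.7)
The plan is to mirror the two-sided approach of Proposition~\ref{prop:NTequivalentnorms}: I will establish the estimate $\|C_{L_{q'}}g\|_{L_{p'}(\R^n)}\lesssim\|C(W_{q'}g)\|_{L_{p'}(\R^n)}$ as a pointwise bound $C_{L_{q'}}g(z)\lesssim C(W_{q'}g)(z)$, while the reverse inequality will require Lemma~\ref{lem:estimatetechnique}. For brevity, set $F(R):=|W_R|^{-1/q'}\|g\|_{L_{q'}(W_R)}$, so that $C_{L_{q'}}g(z)=\sup_{Q\ni z,\,Q\in\mD}\tfrac{1}{|Q|}\sum_{R\subset Q,R\in\mD}|W_R|F(R)$.

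For the pointwise direction, I use that every dyadic cube is a legitimate non-dyadic competitor, and that $\widehat Q=\bigsqcup_{R\subset Q,R\in\mD}W_R$ up to zero sets, which reduces the task to the local estimate $\iint_{W_R}W_{q'}g\,dt\,dx\gtrsim|W_R|F(R)$ for each dyadic $R$. To prove it, I partition $W_R$ into boundedly many Whitney-type cubes $W^R_j$ arranged so that $W^R_j\subset W(t,x)$ for every $(t,x)\in W^R_j$, as in the proof of Proposition~\ref{prop:NTequivalentnorms}(i). Because then $|W(t,x)|\approx|W_R|\approx|W^R_j|$ and $\|g\|_{L_{q'}(W(t,x))}\ge\|g\|_{L_{q'}(W^R_j)}$, integration over $W^R_j$ followed by summation over $j$ yields
$$
\iint_{W_R}W_{q'}g\,dt\,dx\gtrsim|W_R|^{1-1/q'}\sum_j\|g\|_{L_{q'}(W^R_j)}\ge|W_R|^{1-1/q'}\|g\|_{L_{q'}(W_R)},
$$
where the last step is the elementary inequality $\bigl(\sum_j a_j^{q'}\bigr)^{1/q'}\le\sum_j a_j$ valid for $1\le q'\le\infty$ (with the convention that the left-hand side equals $\max_j a_j$ when $q'=\infty$).

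For the reverse direction, assume $C(W_{q'}g)(z)>\lambda$ is witnessed by a non-dyadic cube $Q\ni z$. Covering $Q$ by boundedly many dyadic cubes of comparable sidelength, the corresponding Carleson boxes cover $\widehat Q$, so a pigeonhole produces a dyadic $Q_d$ with $\tfrac{1}{|Q_d|}\iint_{\widehat{Q_d}}W_{q'}g\gtrsim\lambda$. For each $(t,x)\in W_R$ with $R\subset Q_d$, the non-dyadic region $W(t,x)$ lies in the union of the $W_{R'}$ for $R'$ in a neighbor family $\mathcal N(R)$ satisfying $\ell(R')\approx\ell(R)$ and $\dist(R',R)\lesssim\ell(R)$; hence $W_{q'}g(t,x)\lesssim\max_{R'\in\mathcal N(R)}F(R')$. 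Swapping the order of summation (each $R'$ belongs to $\mathcal N(R)$ for boundedly many $R$ with $|W_R|\approx|W_{R'}|$) and covering the bounded dilation of $Q_d$ by boundedly many dyadic siblings $\tilde Q_{d,k}$ of the same sidelength produces the bound
$$
\tfrac{1}{|Q_d|}\iint_{\widehat{Q_d}}W_{q'}g\,dt\,dx\lesssim\sum_k\tfrac{1}{|\tilde Q_{d,k}|}\sum_{R'\subset\tilde Q_{d,k}}|W_{R'}|F(R').
$$
A final pigeonhole selects some $\tilde Q_{d,k}$ on which $C_{L_{q'}}g\gtrsim\lambda$. Since $\tilde Q_{d,k}$ has diameter and distance to $z$ both comparable to $|Q|^{1/n}$, Lemma~\ref{lem:estimatetechnique}, applied to $f=C(W_{q'}g)$ and $g=C_{L_{q'}}g$, converts this into the desired $L_{p'}$ inequality; the independence statements then follow at once.

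The hardest part will be the combinatorial bookkeeping in the reverse direction, where a single non-dyadic Whitney region can straddle several dyadic ones; the carefully chosen neighbor families $\mathcal N(R)$, together with the two nested pigeonhole steps, are precisely what is needed to absorb all such boundary effects into dimensional constants.
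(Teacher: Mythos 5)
Your proposal is correct and follows essentially the same route as the paper's proof: a pointwise domination $C_{L_{q'}}g(z)\lesssim C(W_{q'}g)(z)$ obtained from a local reverse estimate on each dyadic Whitney region, and the converse via covering the relevant Whitney regions by boundedly many dyadic ones of comparable scale followed by Lemma~\ref{lem:estimatetechnique}. The only cosmetic differences are that you use subadditivity of the $L_{q'}$-norm over a partition of $W_R$ where the paper invokes Lemma~\ref{lem:bigmean}, and that in the reverse direction your covering cubes $\tilde Q_{d,k}$ should be allowed sidelength up to a fixed multiple of $\ell(Q_d)$ rather than exactly $\ell(Q_d)$, since a neighbour $R'$ of $R=Q_d$ can have $\ell(R')$ as large as about $2c_0\ell(Q_d)$ (the paper's choice $\ell(Q)\le\ell(Q_j)\le N\ell(Q)$ accommodates exactly this).
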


\begin{proof}
  It is straightforward to check that the estimates below go through for $q'=\infty$ by properly interpreting the integrals.

  (i)
  To prove the estimate
  $$
  \|C(W_{q'} g)\|_{L_{p'}(\R^n)} \gtrsim \|C_{L_{q'}}g\|_{L_{p'}(\R^n)},
  $$
  assume that $C_{L_{q'}}g(z)>\lambda$.
  Then there is a cube $Q\in \mD$ such that $z\in Q$ and
  $$
  \frac 1{|Q|}\sum_{R\subset Q} |W_R|^{1-1/q'} \|g\|_{L_{q'}(W_R)}>\lambda.
  $$
  We claim that there is a constant $c>0$ such that
  $$
  \frac 1{|W_R|}\iint_{W_R} \left( \frac 1{|W(t,x)|} \iint_{W(t,x)} |g|^{q'} \right)^{1/q'} \ge c
  \left( \frac 1{|W_R|} \iint_{W_R} |g|^{q'} \right)^{1/q'}.
  $$
  Given this estimate, it follows that
  $$
    c\lambda< \frac 1{|Q|}\sum_{R\subset Q} \iint_{W_R} W_{q'} g=  \frac 1{|Q|}\iint_{\widehat Q} W_{q'}g\le C(W_{q'}g)(z),
  $$
  and hence $c\,C_{L_{q'}}g(z)\leq C(W_{q'}g)(z)$, even pointwise, from which the inequality in $L_{p'}(\R^n)$ follows.
    
  To prove the claimed reverse H\"older estimates, consider (non-dyadic) cubes $W\subset \R^{1+n}_+$
  with $\diam(W)= c_2\,  \dist(W, \R^n)$.
  We fix $c_2>0$ small enough, depending on $c_0,c_1$, so that
$$
  W \subset \bigcap_{(s,y)\in W} W(s,y).
$$
 It is clear that there is an integer $N<\infty$ such that $W_R$ is the union of at most $N$
 such cubes $W$, uniformly for all $R$.
 Lemma~\ref{lem:bigmean} shows that one of these cubes $W$, say $W_0$,
 has $\frac 1{|W_0|} \iint_{W_0} |g|^{q'} \gtrsim \frac 1{|W_R|} \iint_{W_R} |g|^{q'}$.
 We obtain
 \begin{multline*}
    \frac 1{|W_R|}\iint_{W_R} \left( \frac 1{|W(t,x)|} \iint_{W(t,x)} |g|^{q'} \right)^{1/q'} \\
    \gtrsim
     \frac 1{|W_0|}\iint_{W_0} \left( \frac 1{|W(t,x)|} \iint_{W(t,x)} |g|^{q'} \right)^{1/q'} 
     \gtrsim
      \frac 1{|W_0|}\iint_{W_0} \left( \frac 1{|W_0|} \iint_{W_0} |g|^{q'} \right)^{1/q'} \\
      =  \left( \frac 1{|W_0|} \iint_{W_0} |g|^{q'} \right)^{1/q'} 
      \gtrsim \left( \frac 1{|W_R|} \iint_{W_R} |g|^{q'} \right)^{1/q'}.
\end{multline*}

(ii)
  Conversely, to prove the estimate $\|C(W_{q'} g)\|_{L_{p'}(\R^n)} \lesssim \|C_{L_{q'}}g\|_{L_{p'}(\R^n)}$,
  assume that $C(W_{q'}g)(z)>\lambda$.
  Then there is a cube $Q$ such that $z\in Q$ and
  $$
  \frac 1{|Q|}\iint_{\widehat Q} W_{q'}g >\lambda.
  $$
  There is an integer $N<\infty$ such that $\bigcup_{(t,x)\in \widehat Q} W(t,x)\subset \bigcup_{j=1}^N \widehat Q_j=: U$ for some dyadic cubes $Q_j\in \mD$ with $\ell(Q)\le \ell(Q_j)\le N \ell(Q)$, $1\le j\le N$.  
Note that we can choose $N$ independent of $Q$.
Let $h:= |g| 1_U$ and $\widetilde W_R:= \bigcup_{(t,x)\in W_R} W(t,x)$,
and note that there are finitely many $S\in \mD$ such that $W_S$ intersect $\widetilde W_R$ (all with $\ell(S)\approx \ell(R)$), uniformly in $R$.
Then
\begin{multline*}
  \lambda |Q| <\iint_{\widehat Q}  \left( \frac 1{|W(t,x)|} \iint_{W(t,x)} h^{q'} \right)^{1/q'} 
  = \sum_{R\in \mD} \iint_{W_R}  \left( \frac 1{|W(t,x)|} \iint_{W(t,x)} h^{q'} \right)^{1/q'} \\
   \le \sum_{R\in \mD} |W_R|^{1-1/q'}\left( \iint_{W_R} \Big( \frac 1{|W(t,x)|} \iint_{W(t,x)} h^{q'} \Big) \right)^{1/q'} \\
   \lesssim\sum_{R\in \mD} |W_R|^{1-1/q'}\left( \iint_{\widetilde W_R}  h^{q'} \right)^{1/q'}
   \lesssim \sum_{R\in \mD} \sum_{S\in \mD: W_S\cap \tilde W_R\ne \emptyset} |W_S|^{1-1/q'}\left( \iint_{W_S}  h^{q'} \right)^{1/q'} \\
   =\sum_{S\in \mD} |W_S|^{1-1/q'}\left( \iint_{W_S}  h^{q'}  \right)^{1/q'}  \sum_{R\in \mD: \tilde W_R\cap W_S\ne \emptyset} 1\\
   \lesssim \sum_{S\in\mD: W_S\subset U}  |W_S|^{1-1/q'}  \|g\|_{L_{q'}(W_S)}
   \le \sum_{j=1}^N |Q_j| \inf_{Q_j} C_{L_{q'}} g.
\end{multline*}
Thus there is $c>0$ and $1\le j\le N$ such that $C_{L_{q'}}g>c\lambda$ on $Q_j$.
Lemma~\ref{lem:estimatetechnique} applies since we may assume $\dist(z,Q_j)\lesssim \ell(Q)\le \ell(Q_j)$.
\end{proof}

\begin{proof}[Proof of Theorem~\ref{thm:nondyadicduality}]
  The result follows from Corollary~\ref{cor:dyadicwithr}
  and Propositions~\ref{prop:NTequivalentnorms} and \ref{prop:Cequivalentnorms}.
  Note that by replacing $|f|^r$, $|g|^r$ by $f, g$, it suffices to consider the case $r=1$.
\end{proof}

\begin{proof}[Proof of Theorem~\ref{thm:nondyadicdualspaces}]
  The result follows from Theorem~\ref{thm:fulldyadicduality}
  and Propositions~\ref{prop:NTequivalentnorms} and \ref{prop:Cequivalentnorms}.
\end{proof}

\bibliographystyle{acm}

\end{document}